\newtheorem{thm}{Theorem}[section]
\newtheorem{lemma}[thm]{Lemma}
\newtheorem{prop}[thm]{Proposition}
\newtheorem{conj}[thm]{Conjecture}
\newcommand{\beq}[1]{\begin{equation}\label{#1}}
\newcommand{\enq}[0]{\end{equation}}
\newcommand{\bn}[0]{\bigskip\noindent}
\newcommand{\mn}[0]{\medskip\noindent}
\newcommand{\nin}[0]{\noindent}
\newcommand{\sub}[0]{\subseteq}
\newcommand{\sm}[0]{\setminus}
\renewcommand{\dots}[0]{,\ldots,}
\newcommand{\A}[0]{{\cal A}}
\newcommand{\EE}[0]{{\bf E}}
\newcommand{\ww}{\mbox{{\sf w}}}
\newcommand{\E}[0]{{\bf E}}
\newcommand{\0}[0]{\emptyset}
\renewcommand{\qed}[0]{\begin{flushright} \rule{2mm}{3mm} \end{flushright}}
\def\qqqed{\null\nobreak\hfill\hbox{\rule{2mm}{3mm} }\par\smallskip}
\newcommand{\C}[2]{{{#1}\choose{{#2}}}}
\newcommand{\Cc}[0]{\tbinom}
\newcommand{\ga}[0]{\alpha }
\newcommand{\gb}[0]{\beta }
\newcommand{\gc}[0]{\gamma }
\newcommand{\gd}[0]{\delta }
\newcommand{\gD}[0]{\Delta }
\newcommand{\gl}[0]{\lambda }
\newcommand{\go}[0]{\omega}
\newcommand{\gO}[0]{\Omega}
\newcommand{\gs}[0]{\sigma}
\newcommand{\gz}[0]{\zeta}
\newcommand{\eps}[0]{\varepsilon }
\newcommand{\vt}[0]{\vartheta}
\newcommand{\rrr}[0]{r}
\newcommand{\comments}[1]{}
\begin{document}
\renewcommand{\thefootnote}{\fnsymbol{footnote}}
\footnotetext{AMS 2000 subject classification:  60F10, 05C80}
\footnotetext{Key words and phrases:  upper tails, large deviations,
random graphs, subgraph counts
}

\title{Tight upper tail bounds for cliques}
\author{B. DeMarco\footnotemark $~$ and J. Kahn\footnotemark
}
\date{}
\footnotetext{ * supported by the U.S.
Department of Homeland Security under Grant Award Number 2007-ST-104-000006.}
\footnotetext{ $\dag$ Supported by NSF grant DMS0701175.}

\date{}

\maketitle

\begin{abstract}
With $\xi_{k}=\xi_{k}^{n,p}$ the number of copies of $K_k$ in the usual (Erd\H{o}s-R\'enyi) random
graph $G(n,p)$, $p\geq n^{-2/(k-1)}$ and $\eta>0$, we show when $k>1$
$$\Pr(\xi_k> (1+\eta)\E \xi_k) < \exp \left[-\gO_{\eta,k} \min\{n^2p^{k-1}\log(1/p), n^kp^{\binom{k}{2}}\}\right].$$
This is tight up to the value of the constant in the exponent.

\end{abstract}

\section{Introduction}
Let $G(n,p)$ be the Erd\H{o}s-R\'enyi random graph on $n$ vertices,
in which every edge occurs independently with probability $p$, and let $H$ be a fixed graph with $v_H=|V(H)|$ and $e_H=|E(H)|$.
A {\it copy} of $H$ in $G(n,p)$ is any subgraph of $G(n,p)$ isomorphic to $H$.  It has been a long studied question (e.g. \cite{Chat,DeM,JOR,Jan4,Jan5, Kim2, Vu1})
to estimate,
for $\eta>0$ and $\xi_H=\xi_H^{n,p}$ the number of copies of $H$ in $G(n,p)$,
\beq{Prxi}
\Pr\left(\xi_H>(1+\eta)\E \xi_H\right).
\enq

\medskip
To avoid irrelevancies, let us declare at the outset that we always assume
$p\geq n^{-1/m_H}$, where, as usual
(e.g. \cite[p.6]{JLR}),
\beq{mH}
m_H =\max\{e_K/v_K:K\sub H\}
\enq
(so $n^{-1/m_H}$ is a threshold for ``$G\supseteq H$";
see \cite[Theorem 3.4]{JLR});
in particular, when $H=K_k$ we assume $p\geq n^{-2/(k-1)}$.
For smaller $p$
the problem is not very interesting
(e.g. for bounded $\eta$ the probability in \eqref{Prxi} is easily seen to be
$\Theta(\min\{n^{v_{K}}p^{e_{K}}:K\sub H, e_{K}>0\})$;
see \cite[Theorem 3.9]{JLR} for a start),
and we will not pursue it here.

Janson and Ruci\'nski $\cite{Jan4}$
offer a nice overview of the methods used prior to 2002 to
obtain upper bounds on the probability in \eqref{Prxi},
by far the more challenging part of the problem.
To get an idea of the difficulty, note that
even for the case that $H$ is a triangle, only quite poor upper bounds
were known until a breakthrough result of Kim and Vu \cite{Kim2},
who used, {\em inter alia},
the ``polynomial concentration" machinery of \cite{KimVu} to show,
for $p>n^{-1}\log n$,
\beq{kimvu}
\exp_p[O_\eta(n^2p^2)] < \Pr(\xi_H>(1+\eta)\E \xi_H) < \exp[-\gO_\eta(n^2p^2)].
\enq

\mn
(The easy lower bound, seemingly first observed in \cite{Vu1},
is, for example, the probability of containing a complete graph on
something like
$(1+\eta)^{1/3}np$ vertices.  Of course the subscript $\eta$ in the lower
bound is unnecessary if, for example, $\eta\leq 1$, which is what we usually
have in mind.)  Polynomial concentration was also used by Vu
\cite{Vu} to show that if $H$ is strictly balanced and
$\E \xi_H\leq\log n$, then
\beq{vu}
\Pr(\xi_H>(1+\eta)\E \xi_H)< \exp[-\gO_{\eta}(\E \xi_H)].
\enq

\medskip
The result of \cite{Kim2} was vastly extended in a beautiful paper of
Janson, Oleszkiewicz and Ruci\'nski \cite{JOR}, where it was shown
that for any $H$ and $\eta$,
\beq{jor}
\exp_p[O_{H,\eta}(M_H(n,p))]< \Pr(\xi_H> (1+\eta)\E \xi_H) <
\exp [-\gO_{H,\eta}(M_H(n,p))],
\enq
thus determining the probability \eqref{Prxi} up to a factor $O(\log(1/p))$
in the exponent for constant $\eta$.
A definition of $M$ is given in Section \ref{CR}; for now we
just mention that (for $p\geq n^{-2/(k-1)}$)
$M_{K_k}(n,p)=n^2p^{k-1}$.  (It should also be noted that in the limited
range where it applies, the upper bound
in \eqref{vu} is better than the one in \eqref{jor}.)

\medskip
While it seems natural to expect that the lower bound in \eqref{jor}
is ``usually" the truth (see Section \ref{CR} for a precise guess),
the only progress
in this direction until quite recently was
\cite{Jan5}, which
established the upper bound
$
\exp [-\gO(M_H(n,p)\log^{1/2}(1/p))]
$
for $H=K_4$ or $C_4$ (the 4-cycle)
and {\em some} values of $p$.

The $\log(1/p)$ gap was finally closed for the case $H=K_3$ by Chatterjee
\cite{Chat} and, independently, the present authors \cite{DeM}.
More precisely, \cite{Chat}
showed that for a suitable $C$ depending on $\eta$ and
$p>Cn^{-1}\log n$,

$$\Pr(\xi_{K_3}> (1+\eta)\E \xi_{K_3}) < p^{\Omega_{\eta}(n^2p^2)},$$
while \cite{DeM} showed, somewhat more generally, that for $p>n^{-1}$,
$$\exp [-O_{\eta}(f(3,n,p))]<\Pr(\xi_{K_3}> (1+\eta)\E \xi_{K_3})
<\exp [-\Omega_{\eta}( f(3,n,p))],$$
where $f(k,n,p):=\min\{n^2p^{k-1}\log(1/p), n^kp^{\binom{k}{2}}\}$.
(In what follows we will often abbreviate $f(k,n,p)=f(k,n)$.)

In this paper we considerably extend the method
of \cite{DeM} to settle the problem for general cliques and a bit more.

\begin{thm}\label{upper}
Assume $H$ on $k$ vertices has minimum degree at least $k-2$ (that is, the complement of $H$ is a matching).  Then for all $\eta>0$ and
$ p\geq n^{-2/(k-1)}$,
  $$\Pr\left(\xi_H\geq (1+\eta)\E(\xi_H)\right)\leq \exp \left[-\Omega_{\eta,H}(f(k,n,p))\right].$$
\end{thm}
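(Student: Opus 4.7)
The plan is to extend the method of \cite{DeM} for triangles to $K_k$ and thence to general $H$ whose complement is a matching. The two terms in $f(k,n,p)=\min\{n^2p^{k-1}\log(1/p),\, n^kp^{\binom{k}{2}}\}$ should correspond to two distinct mechanisms for an upper-tail deviation: planting a clique on roughly $m\approx np^{(k-1)/2}$ vertices yields $\Theta(\eta)\E\xi_{K_k}$ extra copies of $K_k$ at probability cost $p^{\binom{m}{2}}=\exp[-\Theta(n^2p^{k-1}\log(1/p))]$; and an ordinary deviation around the mean costs $\exp[-\Omega(\E\xi_{K_k})]$. The bound \eqref{jor} already supplies $\exp[-\Omega(n^2p^{k-1})]$, so the real gain we need is the extra $\log(1/p)$ factor, whose natural source is the ``pay $\log(1/p)$ per forced edge'' in the planting picture.

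I would focus first on $H=K_k$; the reduction to general $H$ (complement a matching) should follow because $\xi_H$ is a polynomial in $G(n,p)$-edges only a matching-factor away from $\xi_{K_k}$, so deviations transfer up to constants. For $K_k$, write $X_e$ for the number of $K_k$'s through edge $e$, so that $\binom{k}{2}\xi_{K_k}=\sum_e X_e$, where each $X_e$ counts $K_{k-2}$'s in the common neighborhood of $e$'s endpoints. Fix a threshold $L$ (to be calibrated against the typical value of $X_e$ and the target excess) and call $e$ \emph{heavy} if $X_e\ge L$. On the event $\xi_{K_k}\ge(1+\eta)\E\xi_{K_k}$, the excess $\Omega(\eta)\E\xi_{K_k}$ must be carried either by heavy edges (Case~A) or by non-heavy edges (Case~B). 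In Case~B, a martingale or polynomial-concentration argument---likely applied inductively, using the theorem for smaller clique sizes to control the conditional behaviour of $X_e$ on the event that it is moderate---should yield the $\exp[-\Omega(\E\xi_{K_k})]$ term. In Case~A, the presence of many heavy edges forces a dense ``witness'' substructure (many $K_{k-2}$'s in common neighborhoods), whose probability should be bounded by union-bounding over candidate witnesses and paying a factor $\log(1/p)$ per forced edge, yielding $\exp[-\Omega(n^2p^{k-1}\log(1/p))]$.

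The principal obstacle is Case~A: showing that the witness for the heavy-edge event cannot be spread too thinly across many vertex sets but must instead concentrate onto a clique-like substructure carrying $\Theta(n^2p^{k-1})$ forced edges, each contributing $\log(1/p)$ to the rate. Executing this for general $k$ will likely require a simultaneous control of lower-order statistics---codegrees, and counts of $K_j$ through vertices or edges for $j<k$---so that heavy-edge counts can be translated into forced sub-structure counts with the correct entropy weight. Calibrating the threshold $L$ and the sub-structure size parameters so that the two regimes of $f(k,n,p)$ glue smoothly at their crossover, and bootstrapping the induction on $k$ cleanly through the layered $X_e$ decomposition, should be the technical heart of the argument.
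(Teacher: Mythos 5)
Your outline correctly anticipates the two-regime structure of $f(k,n,p)$ and one genuinely central idea of the paper: classify edges as heavy or light according to the number of copies of $H$ through them, handle the light part by a (conditional) Chernoff-type bound, and charge the heavy part at a rate of $\log(1/p)$ per forced edge. But as written this is a plan rather than a proof, and the part you yourself flag as ``the principal obstacle'' --- converting an abundance of heavy edges into a small witness set paying $\log(1/p)$ per edge --- is exactly where essentially all of the paper's work lies (Lemmas \ref{P1}--\ref{P4}). Two concrete points where the proposal would fail as stated. First, the reduction direction is backwards: you propose to prove the result for $K_k$ and then transfer to general $H$ with matching complement, but no such transfer is available (the deviation events of $\xi_H$ and $\xi_{K_k}$ are not comparable in the needed direction). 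The paper proves the \emph{more general} statement precisely because the induction forces it: the final Chernoff step conditions on $E(G)\sm\nabla(A,B)$ and needs an upper-tail bound for $X_{H'}$ with $H'=H-v_1v_2$, a non-clique whose complement is a matching, and the weight estimates for vertices and pairs invoke the theorem for various vertex-deleted subgraphs $H_S$. Relatedly, the whole argument is run in a $k$-partite model (Theorem \ref{GGThm}), which is what makes the relevant families of events independent and lets Lemma \ref{L2} deliver the $q^{(1-c)T}$ (i.e.\ $\log(1/p)$-per-element) bounds.

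Second, the heavy-edge analysis cannot be started until one has already discarded copies through high-degree vertices and high-codegree pairs (the paper's types one and two): without that cleanup the individual weights $\ww^*(a,b)$, $\ww_1(b)$, $\ww_1(b,c)$ are not controllable at the scales needed, and the union bound over ``candidate witnesses'' you propose has too much entropy. Moreover, controlling these weights requires a \emph{strengthened} inductive hypothesis --- Theorem \ref{GGThm}(b), bounding deviations by a large factor $2\tau$ --- because one must apply induction to graphs on only $np$ or $np^{3/2}$ vertices, where a constant-factor deviation bound gives a uselessly weak exponent; and even this fails for small $p$ and for $k=4$ (where $f(k-2,(1+\delta)np)\ll f(k,n)$), forcing the separate arguments of Section \ref{PP4} (the Alon-type counting via \eqref{abij}, and the ad hoc $k=4$ case). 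None of these mechanisms is present, even implicitly, in the proposal, so I would assess it as identifying the right skeleton while leaving the decisive steps unproved.
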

\mn
\begin{thm}\label{lower}
For $H=K_k$ and for all $p\geq n^{-2/(k-1)}$,
  $$\Pr\left(\xi_H\geq 2\E(\xi_H)\right)\geq \exp \left[-O_{H}(f(k,n,p))\right].$$
\end{thm}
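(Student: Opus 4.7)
Plan: The proof uses a ``planting'' strategy. Writing $\mu = \E\xi_{K_k} = \binom{n}{k}p^{\binom{k}{2}}$, I would pick an integer $m$ such that a clique on any fixed $m$-vertex set $W$ already forces $\xi_{K_k} \ge 2\mu$; since the $\binom{m}{2}$ edges inside $W$ are independent Bernoullis, this gives the immediate lower bound
$$\Pr(\xi_{K_k}\ge 2\mu) \;\ge\; p^{\binom{m}{2}} \;=\; \exp\!\left[-\tbinom{m}{2}\log(1/p)\right].$$

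Concretely, take $m$ to be the least integer with $\binom{m}{k}\ge 2\mu$. Since $p\ge n^{-2/(k-1)}$ we have $np^{(k-1)/2}\ge 1$, so $m = \Theta_k\!\bigl(\max(k,\,np^{(k-1)/2})\bigr)$ and $\binom{m}{2}\log(1/p) = O_k\!\bigl(n^2p^{k-1}\log(1/p)\bigr)$. This already settles the ``dense'' case, where $f(k,n,p) = n^2p^{k-1}\log(1/p)$: fix any such $W$, condition on all edges of $\binom{W}{2}$ appearing (cost $p^{\binom{m}{2}}$), and observe that the induced clique on $W$ contains $\binom{m}{k}\ge 2\mu$ copies of $K_k$ by construction.

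In the complementary ``sparse'' regime $n^kp^{\binom{k}{2}} \le n^2p^{k-1}\log(1/p)$, we have $f(k,n,p) = \Theta(\mu)$ and the planting bound above is too weak; here I would argue directly that $\Pr(\xi_{K_k}\ge 2\mu)\ge \exp[-O_k(\mu)]$ without planting. The key observation is that in this regime the second-moment contributions to $\xi_{K_k}$ from pairs of $k$-subsets sharing $j\in\{2,\dots,k-1\}$ vertices are each $O(\mu)$, so $\mathrm{Var}(\xi_{K_k}) = \Theta(\mu)$ and $\xi_{K_k}$ is well-approximated by $\mathrm{Po}(\mu)$ (e.g.\ via a Chen--Stein estimate). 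The required right-tail bound then follows from the standard Stirling computation $\Pr(\mathrm{Po}(\mu)\ge 2\mu) = \exp[-\Theta(\mu)]$. The boundary subcase $\mu = O(1)$ is even simpler: $\Pr(\xi_{K_k}\ge 2\mu)=\Omega(1)$ by a second-moment argument on $\mathbf{1}[\xi_{K_k}\ge 1]$ (or by Janson's inequality).

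The dense case is essentially a one-line planting calculation, and the main (small) technical point is the $O(\mu)$ variance estimate needed for the Poisson approximation in the sparse regime; once that is in place, everything reduces to standard anti-concentration.
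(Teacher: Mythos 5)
Your dense-regime argument is correct and is essentially what the paper does there: for $p$ with $f(k,n,p)=n^2p^{k-1}\log(1/p)$ the paper simply quotes the lower bound in \eqref{jor}, which is exactly the planted-clique computation you describe. The gap is in the sparse regime. Your variance estimate is fine (in that regime every off-diagonal term $\Delta_j=n^{2k-j}p^{2\binom{k}{2}-\binom{j}{2}}$ is in fact $o(\mu)$), but the step from ``$\xi_{K_k}$ is close to ${\rm Po}(\mu)$ via Chen--Stein'' to ``$\Pr(\xi_{K_k}\ge 2\mu)\ge \exp[-O(\mu)]$'' does not go through. Chen--Stein controls the \emph{total variation} distance, which here is at best polynomially small in $n$, whereas the target probability $\Pr({\rm Po}(\mu)\ge 2\mu)=e^{-\Theta(\mu)}$ must be matched for $\mu$ as large as $\Theta((\log(1/p))^{k/(k-2)})$ --- that is how far the sparse regime $n^kp^{\binom{k}{2}}\le n^2p^{k-1}\log(1/p)$ extends. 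Whenever $\mu\gg\log n$ (a nonempty sub-range of the sparse regime for every $k$, e.g.\ $\mu\asymp\log^2 n$ for $k=4$), the additive TV error swamps $e^{-\Theta(\mu)}$, and the inequality $\Pr(\xi\ge 2\mu)\ge\Pr({\rm Po}(\mu)\ge 2\mu)-d_{TV}$ yields nothing. A TV-type Poisson approximation is the wrong tool for lower-bounding an exponentially small upper tail.

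The paper's argument, which repairs exactly this point, is a planting argument adapted to the sparse regime: set $r=\lceil 2\E\xi_H\rceil$ and plant $r$ \emph{vertex-disjoint} copies of $K_k$. A single placement costs $p^{r\binom{k}{2}}=\exp[-\Theta(r\log(1/p))]$, too expensive on its own, but the number $s$ of placements satisfies $s>(n^k/(rk^k))^{r}$, so $s\,p^{r\binom{k}{2}}=(n^kp^{\binom{k}{2}}/(rk^k))^{r}=\exp[-O(r)]$ since $n^kp^{\binom{k}{2}}/r=\Theta(1)$. To add the contributions of distinct placements $S$ one passes to the disjoint events $R_S=\{S$ is exactly the set of copies of $K_k$ in $G\}$, and Harris' inequality gives $\Pr(R_S\mid Q_S)=\exp[-O(r)]$; hence $\Pr(\xi_H=r)\ge\exp[-O(r)]$. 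Your $\mu=O(1)$ fallback has a related (minor, fixable) defect: $\Pr(\xi\ge 1)=\Omega(1)$ does not by itself give $\Pr(\xi\ge\lceil 2\mu\rceil)=\Omega(1)$ when $\lceil 2\mu\rceil\ge 2$; the disjoint-copies computation above handles that case as well.
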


\nin
{\em Remarks.}
1.
We are most interested in the ``nonpathological" range where
$f(k,n,p)=n^2p^{k-1}\log(1/p)$, so when
$p\geq n^{-2/(k-1)}(\log n)^{2/[(k-1)(k-2)]}$ (or a bit less).
It may be helpful to think mainly of this range as we proceed.

\mn
2.
Though mainly concerned with the case $H=K_k$ in Theorem \ref{upper},
we prove the more general statement for inductive reasons.
For noncliques the bound of Theorem \ref{upper} is not usually tight;
more precisely:
it is tight (up to the constant in the exponent)
if $p=\gO(1)$ or if $\gD:=\gD_H=k-1$ and $p=\gO(n^{-1/\gD})$,
in which cases our upper bound agrees with the lower bound in \eqref{jor};
it is not tight if $\gD=k-2$ and $p=o(1)$
(see the proof of Lemma \ref{dk2})
or if $H\neq K_k$ and $p< n^{-c/\gD}$ for some fixed $c>1$
(see the proof of Lemma \ref{lowerp};
in fact $p=o(n^{-1/\gD})$ is probably enough here---which
would complete this little story---but we don't quite show this).

\medskip
In the next section we show that Theorem \ref{upper} follows from
an analogous assertion for $k$-partite graphs;
most of the paper (Sections \ref{LD}-\ref{LP5})
is then concerned with this modified
problem.  Section \ref{PT1.2} gives the proof of Theorem \ref{lower}
and Section \ref{CR} contains a few concluding remarks.

\section{Reduction}\label{Reduction}

For the rest of the paper we set $t=\log (1/p)$ and take $H$ to be a graph with vertices $v_1,v_2,\ldots, v_k$.  We define $ G=G(n,p,H)$ to be the random graph with vertex set $V=V_1 \cup \cdots \cup V_k$, where the $V_i$'s are disjoint $n$-sets and $\Pr(xy\in E(G))=p$ whenever $x\in V_i$ and $y\in V_j$ for some $v_iv_j\in E(H)$, these choices made independently.  We define a {\it copy} of $H$ in $G$ to be a set of vertices $\{x_1,\ldots,x_k\}$ with $x_i\in V_i$ and $x_ix_j\in E(G)$ whenever $v_iv_j\in E(H)$; use $X_H^{n,p}$ for
the number of such copies; and set
$\Psi(H,n,p)=\E(X_H^{n,p})=n^kp^{e_H}$.
When there is no danger of confusion we will often use $X_H^n$---or,
for typographical reasons $X(H,n)$---for $X_H^{n,p}$
and $\Psi(H,n)$ for $\Psi(H,n,p)$.

The next two propositions show an equivalence between $G(n,p)$ and $G$
with regard to upper tails for subgraph counts. In each we set
$\ga = |{\rm Aut}(H)|n^k/(kn)_k \sim k^{-k}|{\rm Aut}(H)|$
(where as usual $(a)_b=a(a-1)\cdots (a-b+1)$).

\begin{prop}\label{Equiv1}
For $\eta>0$ and $\eps=\eta/(2+\eta)$,
$$\Pr(X_H^{n,p}\geq (1+\eps)\Psi(H,n,p))
>\frac{\alpha \eps}{1-\ga+\alpha\eps}
\Pr(\xi_H^{kn,p}\geq (1+\eta)\E
(\xi_H^{kn,p}) )$$
\end{prop}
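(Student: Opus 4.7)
My plan is to couple the two models by placing $G(kn,p)$ on the common vertex set $V = V_1 \sqcup \cdots \sqcup V_k$ and letting $G(n,p,H)$ be the partite part of $G(kn,p)$ (keep edge $ab$ iff $a\in V_i$, $b\in V_j$ with $v_iv_j\in E(H)$). To inject useful randomness into the conditional distribution of $X:=X_H^{n,p}$ given $\xi:=\xi_H^{kn,p}$, I would apply an independent uniform permutation $\sigma$ of $V$ to $G:=G(kn,p)$, obtaining $G':=\sigma\cdot G$ with the same marginal law; then the number $\tilde X$ of partite copies of $H$ in $G'$ (under the fixed partition) is distributed as $X$, while $\xi$ and $\xi_G=\xi_{G'}$ are determined by $G$.

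Conditional on $G$, two observations do the work. First, $\EE_\sigma[\tilde X\mid G]=\alpha\,\xi_G$, since each of the $|\mathrm{Aut}(H)|\,\xi_G$ labeled embeddings of $H$ in $G$ lands inside $V_1\times\cdots\times V_k$ after $\sigma$ with probability $n^k/(kn)_k$, and $\alpha=|\mathrm{Aut}(H)|\,n^k/(kn)_k$. Second, $\tilde X\leq\xi_G$: disjointness of the parts means the partition injectively recovers which vertex plays which role $v_i$ from the underlying unordered vertex set of any partite copy, so distinct partite copies in $G'$ sit on distinct subgraphs isomorphic to $H$, and $\xi_{G'}=\xi_G$ by invariance under $\sigma$. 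On the event $E:=\{\xi\geq(1+\eta)\EE\xi\}$ we have $\alpha\xi_G\geq(1+\eta)\Psi>(1+\eps)\Psi$ (where $\Psi=\Psi(H,n,p)$), so the reverse Markov inequality applied to $\tilde X\in[0,\xi_G]$ with conditional mean $\alpha\xi_G$ gives
\[
\Pr_\sigma\bigl(\tilde X\geq(1+\eps)\Psi\,\big|\,G\bigr)\;\geq\;\frac{\alpha\xi_G-(1+\eps)\Psi}{\xi_G-(1+\eps)\Psi}.
\]

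The right-hand side is increasing in $\xi_G$ (its $\xi_G$-derivative equals $(1-\alpha)(1+\eps)\Psi/(\xi_G-(1+\eps)\Psi)^2>0$, using $\alpha<1$), so on $E$ it is at least its value at $\xi_G=(1+\eta)\Psi/\alpha$. A short manipulation using the identities $\eta-\eps=\eps(1+\eta)$ and $1+\eps=(1-\eps)(1+\eta)$, both immediate from $\eps=\eta/(2+\eta)$, simplifies this lower bound to $\alpha\eps/(1-\alpha+\alpha\eps)$. Integrating over $G$ then gives $\Pr(X\geq(1+\eps)\Psi)\geq\tfrac{\alpha\eps}{1-\alpha+\alpha\eps}\Pr(E)$, completing the argument. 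The main obstacle I expect is establishing the sharp bound $\tilde X\leq\xi_G$: without the rigidity provided by disjoint parts one only obtains the weaker $\tilde X\leq|\mathrm{Aut}(H)|\,\xi_G$, which would produce a too-small constant in the final ratio.
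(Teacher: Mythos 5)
Your argument is correct and is essentially the proof the paper has in mind: the paper omits the proof, deferring to the $K_3$ case in \cite{DeM}, which uses exactly this symmetrization — a uniform vertex permutation giving $\E_\sigma[\tilde X\mid G]=\alpha\xi_G$, the pointwise bound $\tilde X\le \xi_G$, and the reverse Markov inequality evaluated at $\xi_G=(1+\eta)\E\xi$ to produce the constant $\alpha\eps/(1-\alpha+\alpha\eps)$. All the supporting details you supply (injectivity of partite copies into copies, the monotonicity in $\xi_G$, and the identities for $\eps=\eta/(2+\eta)$) check out.
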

\nin We omit the proof of Proposition \ref{Equiv1} since it is a straightforward
generalization of the case $H=K_3$ proved in \cite{DeM}.

\begin{prop}\label{Equiv2}
For any $\eps>0$ there is a $C=C_{\eps,H}$ such that for $p>Cn^{-1/m_H}$,
$$\Pr\left(X_H^{n,p}\geq (1+\eps)\Psi(H,n,p)\right)
< 2\Pr(\xi_H^{kn,p}\geq (1+\alpha\eps/2)\E (\xi_H^{kn,p})) .$$
\end{prop}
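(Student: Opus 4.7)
The plan is to realize $X_H^{n,p}$ as a count of ``partition-respecting'' labeled embeddings in $G(kn,p)$ and then average over a uniformly random equi-partition of the vertices. Sample $G\sim G(kn,p)$ and, independently, a uniform equi-partition $\pi:V(G)\to[k]$ with parts $V_1,\dots,V_k$ of size $n$. For any fixed $\pi_0$, the edges of $G$ between $H$-adjacent parts are i.i.d.\ $\mathrm{Bernoulli}(p)$'s, so the count
\[
N_{\pi_0}(G):=|\{\phi:V(H)\to V(G)\ \text{a labeled $H$-embedding with}\ \phi(v_i)\in V_i\ \forall\, i\}|
\]
has the same distribution as $X_H^{n,p}$. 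Since $\Psi=\alpha\,\E\xi_H^{kn,p}$ (essentially by the definition of $\alpha$) and a direct double-count gives $\E_\pi[N_\pi(G)\mid G]=\alpha\,\xi_H^{kn,p}(G)$, it suffices to bound $\Pr_{G,\pi}(N_\pi\ge(1+\eps)\alpha\E\xi_H^{kn,p})$ in the joint $(G,\pi)$-space.

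The core step will be to show that for $G$ in a ``typical'' event $\mathcal T$ of overwhelming probability,
\[
\Pr_\pi\bigl(N_\pi(G)\ge(1+\lambda)\,\alpha\,\xi_H^{kn,p}(G)\ \big|\ G\bigr)\ \le\ \tfrac12,
\]
where $\lambda=\lambda(\eps,\alpha)>0$ is chosen so that $(1+\lambda)(1+\alpha\eps/2)\le 1+\eps$ (possible because $\alpha\le k!/k^k<2$). I would prove this by expanding $N_\pi=\sum_\phi I_\phi$ over labeled embeddings $\phi$ of $H$ in $G$ (with $I_\phi$ the indicator that $\phi(v_i)\in V_i$ for all $i$), noting that $\mathrm{Cov}_\pi(I_\phi,I_{\phi'})$ is nonzero only when $\phi,\phi'$ share a vertex with compatible labels, grouping pairs by their overlap size, and using the hypothesis $p>C_{\eps,H}n^{-1/m_H}$ (which, via the standard second-moment method for subgraph counts, forces all relevant subgraph counts in $G$ to be close to their expectations) to conclude $\mathrm{Var}_\pi(N_\pi\mid G)=o(\mu_\pi^2)$ on $\mathcal T$; Chebyshev then supplies the displayed estimate.

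Finally, the decomposition
\begin{align*}
\Pr(X_H^{n,p}\ge(1+\eps)\Psi)
&\le \Pr(\xi_H^{kn,p}\ge(1+\alpha\eps/2)\E\xi_H^{kn,p})\\
&\quad+ \E_G\!\left[\mathbf{1}_{\xi_H<(1+\alpha\eps/2)\E\xi_H}\,\Pr_\pi(N_\pi\ge(1+\eps)\Psi\mid G)\right]
\end{align*}
combined with the concentration estimate handles the second summand: for $G\in\mathcal T$ in this range the target threshold $(1+\eps)\alpha\E\xi_H^{kn,p}$ exceeds the conditional mean $\alpha\xi_H(G)$ by a factor of at least $1+\lambda$, so the inner probability is at most $\tfrac12$; the atypical contribution from $G\notin\mathcal T$ is negligible and is absorbed into the first summand. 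The factor $2$ in the proposition reflects this $\tfrac12$ loss. \textbf{Main obstacle.} The crux is the variance estimate: establishing $\mathrm{Var}_\pi(N_\pi\mid G)=o(\mu_\pi^2)$ uniformly over the relevant $G$'s requires a careful enumeration of overlap contributions of pairs of labeled $H$-embeddings, and it is precisely there that the lower bound $p>C_{\eps,H}n^{-1/m_H}$ (with $C_{\eps,H}$ chosen large) is used.
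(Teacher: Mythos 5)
There is a genuine gap, and it lies in the final decomposition rather than in the variance estimate you flag as the main obstacle. Your Chebyshev step gives, for each fixed typical $G$ with $\xi_H^{kn,p}(G)<(1+\alpha\eps/2)\E\xi_H^{kn,p}$, the bound $\Pr_\pi(N_\pi\geq(1+\eps)\Psi\mid G)\leq\tfrac12$. But in your decomposition this $\tfrac12$ multiplies $\Pr(\xi_H^{kn,p}<(1+\alpha\eps/2)\E\xi_H^{kn,p})$, which is essentially $1$, not the tail probability $q:=\Pr(\xi_H^{kn,p}\geq(1+\alpha\eps/2)\E\xi_H^{kn,p})$, which can be as small as $\exp[-\Theta(n^2p^{k-1}\log(1/p))]$. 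So the two summands give $q+\tfrac12$, which is vacuous; the factor $2$ in the proposition requires the second summand to be at most $q$, i.e.\ you would need $\Pr_\pi(N_\pi\geq(1+\eps)\Psi\mid G)$ to be \emph{exponentially small} (comparable to $q$) for almost every $G$ with $\xi$ below threshold. Chebyshev cannot deliver that, and an exponential upper tail for $N_\pi$ over the partition randomness is itself a hard upper-tail problem. (A secondary circularity: your typical event $\mathcal{T}$ consists of upper bounds on overlapping-pair/extension counts, and $\Pr(\mathcal{T}^c)$ would also have to be negligible relative to $q$ --- again an upper-tail estimate of the kind being proved.) The random-equipartition device is in fact the natural mechanism for the \emph{reverse} inequality, Proposition \ref{Equiv1}, where one lower-bounds the partite tail by restricting to good pairs $(G,\pi)$; it does not transfer to the present direction, because a large $N_\pi$ for the realized $\pi$ forces only $\xi\geq N_\pi/|{\rm Aut}(H)|\approx(1+\eps)k^{-k}\E\xi$, far below the target threshold.

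The paper's route is different and avoids the issue by keeping the partition fixed: realize $G(kn,p)$ as $G(n,p,H)$ plus the independently sampled missing edges, write $\xi=X+\xi^*$ where $\xi^*$ counts the copies not realizable inside the partite structure, and note that $\{X>(1+\eps)\E X\}$ and $\{\xi^*>\E\xi^*-\tfrac{\alpha\eps}{2}\E\xi\}$ are both increasing events whose intersection forces $\xi>(1+\alpha\eps/2)\E\xi$. Harris' inequality then gives $\Pr(\xi>(1+\alpha\eps/2)\E\xi)\geq\Pr(X>(1+\eps)\E X)\cdot\Pr(\xi^*>\E\xi^*-\tfrac{\alpha\eps}{2}\E\xi)$, and Janson's \emph{lower-tail} inequality (where the hypothesis $p>Cn^{-1/m_H}$ enters, forcing $\bar\Delta=O(C^{-1}\E^2\xi)$) shows the second factor exceeds $\tfrac12$. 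Crucially, that $\tfrac12$ multiplies the small quantity $\Pr(X>(1+\eps)\E X)$, which is exactly what the claimed inequality needs.
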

\mn
(See \eqref{mH} for $m_H$.)

\begin{proof}
We may choose $G^*=G(kn,p)$ by first choosing ${G}={G}(n,p,H)$ and then letting

$$E(G^*)=E({G}) \cup S$$
where $\Pr(xy\in S)=p$ whenever $x\neq y$, $x\in V_i$ and $y\in V_j$ for some $v_iv_j\not\in E(H)$, these choices made independently.
Write $\xi$ and $X$ for
the numbers of copies of $H$ in $G^*$
and $G$ respectively
(thus
$ \xi=\xi_H^{kn,p}$ and $X=X_H^{n,p}$), and set $\xi^*=\xi-X$.
Since $\EE X = \ga \EE \xi$,
we have, using Harris' Inequality,
\beq{har}
\Pr(\xi > (1+ \tfrac{\ga\eps}{2})\EE \xi)\geq
\Pr(X > (1+\eps)\EE X)\Pr(\xi^*  > \EE\xi^* - \tfrac{\ga\eps}{2}\EE \xi);
\enq

\mn
so we need to say that the second probability on the right is at least 1/2.
This is standard, but we summarize the argument for completeness.

A result of Janson from \cite{Janson} (see \cite[(2.14)]{JLR}) gives 
\beq{jan}
\Pr(\xi^* \leq \EE\xi^* - t)
< \exp[-\tfrac{t^2}{2\bar{\gD}}],
\enq
with
\beq{gDbar}
\mbox{$
\bar{\gD}= \sum_{\gs\sim \tau}^*\EE I_{\gs}I_{\tau} \leq \
\sum_{\gs\sim \tau}\EE I_{\gs}I_{\tau}$},
\enq
where (recycling notation a little)
$H_1,\ldots $ are the copies of $H$ in $K_{kn}$;
$I_{\gs} ={\bf 1}_{\{H_{\gs}\sub G^*\}}$;
``$\gs\sim \tau$" means $H_{\gs}$ and $H_{\tau}$ share an edge
(so $\gs\sim \gs$);
and $\sum^*$ means we sum only over $\gs,\tau$ for which $H_\gs,H_\tau$
cannot appear in $G$.

\mn
But (very wastefully),

\begin{align*}
\bar{\gD}&< n^{v_H}\sum\{n^{v_H-v_K}p^{2e_H-e_K}: K\sub H,e_K>0\}\\
&< n^{2v_H}p^{2e_H} \sum\{n^{-v_K}(Cn^{-1/m_H})^{-e_K}:K\sub H, e_K>0\}\\
&
= O(C^{-1}\EE^2\xi),
\end{align*}
where $C$ is the constant from \eqref{plb},
which may be taken large compared to the
implied constant in ``$O(\cdot )$."
Thus, using \eqref{jan}  with the above bound on $\bar{\gD}$
and $t=(\ga\eps/2)\EE \xi$, we find that
the second probability on the right side of \eqref{har}
is at least $1-\exp[-\gO((\ga\eps)^2C)]>1/2$.
\end{proof}

According to Proposition \ref{Equiv1}, Theorem \ref{upper} will follow from the corresponding k-partite statement, {\em viz.}

\begin{thm}\label{GGThm}
If $H$ has minimum degree at least $k-2$, then

\mn
{\rm (a)}
for all $\eps>0$,
$$%\begin{equation}
\Pr\left(X_H^{n,p}\geq (1+\eps)\Psi(H,n,p)\right)< \exp \left[-\Omega_{H,\eps}\left(f(k,n,p)\right)\right];%\tag{3.1(a)}\label{GThm}.
$$%\end{equation}
\mn
{\rm (b)}
for any $\tau\geq 1$,
$$%\begin{equation}
\Pr\left(X_H^{n,p}\geq 2\tau\Psi(H,n,p)\right)<
\exp[-\Omega_{H}( f(k,n\tau^{1/k},p))]. %\tag{3.1(b)} \label{Ext}
$$%\end{equation}
\end{thm}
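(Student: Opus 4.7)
I will prove parts (a) and (b) simultaneously by induction on $k$, with (b) serving as the strengthened inductive hypothesis. The base case $k = 2$ is immediate: $X_H$ is either identically $0$ or a binomial $\mathrm{Bin}(n^2, p)$, and both bounds follow from standard Chernoff estimates.

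For the inductive step, fix a vertex $v_1 \in V(H)$ of minimum degree $d \geq k-2$ and set $H' = H - v_1$, a graph on $k-1$ vertices with minimum degree at least $k-3$, so the inductive hypothesis applies to $H'$. Expose $G$ in two stages: first reveal $G' = G[V_2 \cup \cdots \cup V_k]$, setting $W = X_{H'}(G')$; then reveal the bipartite edges from $V_1$ to its neighbor classes. Conditional on $G'$, we have $X_H = \sum_{x \in V_1} Z_x$, where $Z_x$ counts the copies of $H'$ in $G'$ that extend to a copy of $H$ through $x$. Since the $Z_x$ are independent across $x \in V_1$ once $G'$ is fixed, provided $W$ is controlled and the distribution of individual $Z_x$ is not too heavy-tailed, standard tail bounds for sums of independent variables will deliver the required concentration.

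The heart of the argument is to show that $\{X_H \geq (1+\eps)\Psi\}$ essentially forces at least one of two canonical rare events. The first is an anomalously large $W$, which by the inductive application of (b) at a suitable scale occurs with probability at most $\exp[-\gO(f(k,n,p))]$. The second is a ``clustered'' configuration of the edges from $V_1$: a small $A \subseteq V_1$ whose bipartite neighborhoods in the $v_1$-adjacent parts contain a planted near-complete-bipartite seed that forces many copies of $H'$ to extend through $A$. Enumerating the entropy cost of such a seed, one finds that its probability is at most $\exp[-\gO(n^2 p^{k-1} \log(1/p))]$, matching the first term of $f(k,n,p)$; the complementary term $n^k p^{\binom{k}{2}} = \Psi$ accounts for the sparse regime where $\{X_H \geq (1+\eps)\Psi\}$ is rare simply because $\Psi$ itself is small compared with $1$.

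The main obstacle is that a naive induction on (a) alone does not suffice: in the sparse range near $p = n^{-2/(k-1)}$ one can have $f(k-1, n, p) < f(k, n, p)$, so a direct application of (a) at level $k-1$ yields too weak a bound on $W$. This is precisely the reason for formulating (b) with the multiplicative overshoot $2\tau$ and the rescaled volume $n\tau^{1/k}$: the extra slack provides exactly the room needed to absorb the volume increase when passing from $k-1$ to $k$, provided the $\tau^{1/k}$-scaling is propagated correctly. The delicate technical step will be to split $\{X_H \geq 2\tau\Psi\}$ according to how much of the overshoot is contributed by $W$ versus by the clustering of edges from $V_1$, to apply (b) inductively with the right rescaled $\tau$ on the $W$-piece, and to verify that the two pieces recombine to the required bound $\exp[-\gO(f(k, n\tau^{1/k}, p))]$.
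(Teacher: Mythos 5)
There is a genuine gap, and it sits exactly where you wave your hands. Your central decomposition $X_H=\sum_{x\in V_1}Z_x$ has only $n$ independent summands, and a Chernoff-type bound for a sum of independent variables each bounded by $z$ gives at best $\exp[-\Omega_\eps(\Psi/z)]$. To reach the target $\exp[-\Omega(n^2p^{k-1}t)]$ you would need to truncate each $Z_x$ at $z\approx \Psi/(n^2p^{k-1}t)=n^{k-2}p^{e_H-k+1}/t$, which is \emph{smaller} than $\E Z_x=n^{k-1}p^{e_H}$ by a factor $np^{k-1}t$; whenever $np^{k-1}t\gg 1$ (e.g.\ $p$ constant, or anywhere well above the threshold) the cap lies far below the mean, the truncated sum no longer approximates $X_H$, and the argument cannot close. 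The paper avoids this by decomposing over the $n^2$ independent edge indicators of $\nabla(V_1,V_2)$: with $H'=H-v_1v_2$ it writes the relevant count as $\sum_{a,b}\ww'(a,b)\mathbf{1}_{\{ab\in E\}}$ and caps the weights at $\zeta=\Theta(\Psi/(n^2p^{k-1}s))$, which exceeds $\E\ww'(a,b)=n^{k-2}p^{e_H-1}$ by a factor $p^{-(k-2)}/s\gg1$, so the weighted Chernoff bound (Lemma \ref{Chern}) delivers the exponent $\Psi/\zeta=n^2p^{k-1}s$ without damaging the main term. Note this also explains why the theorem is stated for minimum degree $\geq k-2$ and the induction descends on edges as well as vertices, something a pure vertex-removal induction on $k$ does not provide.

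The second gap is that your claim that the ``clustered / planted near-complete-bipartite seed'' has probability at most $\exp[-\Omega(n^2p^{k-1}\log(1/p))]$ by ``enumerating the entropy cost'' is an assertion of the theorem, not a proof of it: the lower bound construction shows such seeds exist at exactly this probability, and the whole difficulty is showing that \emph{every} way of inflating the count is at least this expensive. In the paper this is the content of Lemmas \ref{P1}--\ref{P4}, which isolate high-degree vertices, high-codegree pairs, and heavy edges, and bound their total weight by $O(\vt)\Psi$ w.l.p.\ via inductive applications of the theorem to smaller graphs together with repeated binomial tail estimates (including a separate treatment of the sparse regime $np^{(k-1)/2}<\log^4 n$ and of $k=4$, where the naive inductive bounds are too weak). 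Finally, a minor point: part (b) for a given $H$ is an immediate two-line corollary of part (a) via $\tau\Psi(H,n)=\Psi(H,n\tau^{1/k})$ and monotonicity of $X$ in $n$; no delicate splitting of the overshoot between $W$ and the $V_1$-edges is needed, and treating (b) as an independently strengthened hypothesis only obscures this.
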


\bigskip

\nin
Note that (b) for a given $H$ follows from (a), since
(noting that $\tau\Psi(H,n)=\Psi(H,n\tau^{1/k})$ and
using (a) for the second inequality)
\begin{align*}
\Pr\left(X_H^{n}\geq 2\tau\Psi(H,n)\right)&\leq \Pr\left(X_H^{n\tau^{1/k}}\geq 2\Psi(H,n\tau^{1/k})\right)\\
&\leq \exp\left[-\Omega_{H}\left(f(k,n\tau^{1/k},p)\right)\right].
\end{align*}
We include (b) because it will be needed for induction;
that is, for a given $H$ we just prove (a), occasionally
appealing to earlier cases of (b).

\medskip
We have formulated the theorem for all $p$ so that the inductive parts of
the proof don't require checking that $p$ falls in some suitable range.
Note, however, that for the proof we can assume
(for our choice of
positive constants $C$ and $c$ depending on $H$ and $\eps$)
\beq{plb}
p>C n^{-2/(k-1)},
\enq
since for smaller $p$ ($>n^{-1/m_H}$) the theorem is trivial,
and
\beq{pub}
p< c,
\enq
since above this the desired bound is given by
\eqref{jor}.
As detailed in the next two lemmas,
\eqref{jor}, together with some auxiliary results from \cite{JOR},
also allows us to ignore certain other cases of
Theorem \ref{GGThm}(a).

\begin{lemma}\label{dk2}
If $\Delta_H\leq k-2$ then
$$\Pr\left(X_H^{n,p}\geq (1+\eps)\Psi (H,n,p)\right)\leq p^{\Omega_{H,\eps}(n^2p^{k-1})} .$$
\end{lemma}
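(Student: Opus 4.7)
Since this lemma is invoked only inside the proof of Theorem~\ref{GGThm}(a), we may assume additionally that $\delta_H \geq k-2$; combined with the hypothesis $\Delta_H \leq k-2$, this forces $H$ to be $(k-2)$-regular (so $k$ is even and $\overline{H}$ is a perfect matching). By \eqref{plb} and \eqref{pub} we may further restrict to $C n^{-2/(k-1)} \leq p \leq c$ for suitable $C,c$.

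The strategy is to pass to the classical setting and invoke \cite{JOR}. Proposition~\ref{Equiv2} gives
$$\Pr(X_H^{n,p} \geq (1+\eps)\Psi(H,n,p)) \leq 2\,\Pr(\xi_H^{kn,p} \geq (1+\ga\eps/2)\E \xi_H^{kn,p}),$$
and the upper half of \eqref{jor} then bounds the right-hand probability by $\exp[-\Omega_{H,\eps}(M_H(kn,p))]$. So the entire claim reduces to showing
$$M_H(kn,p) = \Omega_H(n^2 p^{k-2}).$$
Once this is in hand, the elementary inequality $p^{-1} \geq \log(1/p)$ (valid for all $p\in(0,1)$, since $\max_{0<p<1} p\log(1/p) = 1/e$) gives
$$n^2 p^{k-2} = n^2 p^{k-1}\cdot p^{-1} \geq n^2 p^{k-1}\log(1/p),$$
and hence $\exp[-\Omega(n^2 p^{k-2})] \leq p^{\Omega(n^2 p^{k-1})}$, which is the conclusion of the lemma.

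The one step with any real content --- and the main potential obstacle --- is the lower bound $M_H(kn,p) = \Omega(n^2 p^{k-2})$. Intuitively, the dominant concentration-breaking mechanism for $\xi_H$ is a single vertex of atypically large degree acting as the hub of many extra copies of $H$; for a $(k-2)$-regular $H$ the relevant hub substructure is $K_{1,k-2}$ (with expected count $\Theta(n^2 p^{k-2})$ in $G(kn,p)$) rather than the $K_{1,k-1}$ used for $K_k$. I expect the bound to follow from a short direct inspection of the definition of $M_H$ in \cite{JOR}, specialized to $(k-2)$-regular $H$ and to $p \geq C n^{-2/(k-1)}$, with no new ideas required beyond those already in that paper.
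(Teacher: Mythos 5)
Your proof is correct and is essentially the paper's: reduce to $G(kn,p)$ via Proposition~\ref{Equiv2}, apply the upper bound in \eqref{jor}, and use $M_H(n,p)\geq n^2p^{\Delta_H}\geq n^2p^{k-2}\geq n^2p^{k-1}\log(1/p)$; the one inequality you defer, $M_H(n,p)=\Omega(n^2p^{\Delta_H})$, is exactly \cite[Lemma 6.2]{JOR}, which the paper simply cites (and it is genuinely needed, since for $k\geq 4$ the permitted range $p\geq Cn^{-2/(k-1)}$ dips below $n^{-1/\Delta_H}$, where $M_H$ is given by the fractional-independence-number formula rather than by $n^2p^{\Delta_H}$ directly). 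The only quibble is that your preliminary reduction to $(k-2)$-regular $H$ is both unnecessary (your argument never uses regularity) and slightly too restrictive, since the lemma is also invoked in the proof of Lemma~\ref{P5} for $H'=H-v_1v_2$, which satisfies $\Delta_{H'}\leq k-2$ but need not be regular.
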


\begin{proof}
By Proposition \ref{Equiv2}, it is enough to show
\beq{k-2}
\Pr\left(\xi_H^{n,p}\geq (1+\eps)\E(\xi_H^{n,p})\right)\leq p^{\Omega_{H,\eps}(n^2p^{k-1})};
\enq
but this follows from \eqref{jor}, which
since
$M_H(n,p)\geq n^2p^{\Delta_H}$ (see \cite[Lemma $6.2$]{JOR}),
bounds the left side of \eqref{k-2} by
$$
\exp[-\Omega_{H,\eps} (n^2p^{\Delta_H})]\leq
\exp [-\Omega_{H,\eps}(n^2p^{k-1}t)].$$
\end{proof}

\begin{lemma}\label{lowerp}
For any $H\neq K_k$ on $k$ vertices and
$\gc>0$, if $p<n^{-(1+\gc)/(k-1)}$ then
$$\Pr(X_H^{n}\geq (1+\eps)\Psi(H,n))< p^{\gO_{H,\eps,\gc}(n^2p^{k-1})}.$$
\end{lemma}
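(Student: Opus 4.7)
The plan is to derive Lemma \ref{lowerp} from the JOR upper tail \eqref{jor}, sharpening the baseline $M_H$ estimate so as to capitalize on $H\neq K_k$.

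First, by Lemma \ref{dk2} I may assume $\gD_H=k-1$, so $\ov H$ is a nonempty matching. Since $H\neq K_k$ forces $m_H<(k-1)/2$ strictly, the hypothesis $p\geq n^{-2/(k-1)}$ beats $C_{\eps,H}n^{-1/m_H}$ for $n$ large enough (and any small-$n$ issues can be absorbed into the implicit constant). Proposition \ref{Equiv2} then reduces the task to bounding $\Pr(\xi_H^{kn,p}\geq(1+\ga\eps/2)\E\xi_H^{kn,p})$, which by \eqref{jor} is at most $\exp[-\gO_{H,\eps}(M_H(kn,p))]$.

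Second, the crux is to show
$$M_H(n,p)\geq\gO_{H,\gc}(n^2 p^{k-1}\log(1/p))\quad\text{when }p<n^{-(1+\gc)/(k-1)}.$$
In this range $\log(1/p)=\Theta_\gc(\log n)$, so this is a $\log n$-factor improvement over the general bound $M_H\geq n^2p^{\gD_H}$ from \cite[Lemma 6.2]{JOR}, and it suffices because $p^{\gO(n^2p^{k-1})}=\exp[-\gO(n^2p^{k-1}\log(1/p))]$.

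The main obstacle is precisely this $\log n$ gain. My plan is to inspect the JOR definition of $M_H$ (a minimum of $n^{v_K}p^{e_K}$-type functionals over subgraphs $K\sub H$) and point to a $K$ whose availability rests on $H\neq K_k$. A natural candidate is a maximal clique $K_{k-m}\sub H$ obtained by deleting one endpoint from each edge of $\ov H$, or more generally a proper subgraph built around a missing edge; for such $K$, a direct computation shows that $n^{v_K}p^{e_K}$ exceeds $n^2p^{k-1}$ by a polynomial-in-$n$ factor whenever $p<n^{-(1+\gc)/(k-1)}$, easily absorbing the required $\log n$. The last step---verifying that this contribution (rather than some smaller JOR term) indeed controls $M_H$ from below in this regime---is the main technical point, after which Lemma \ref{lowerp} follows immediately from \eqref{jor}.
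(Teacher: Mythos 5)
Your reduction is the same as the paper's (Lemma \ref{dk2} to force $\gD_H=k-1$, Proposition \ref{Equiv2}, then \eqref{jor}, so that everything comes down to showing $M_H(n,p)=\gO(n^2p^{k-1}t)$ in the stated range of $p$). But the step you defer as ``the main technical point'' is the entire content of the lemma, and the strategy you sketch for it would not work. Since $M_H(n,p)$ is a \emph{minimum} of $(\Psi(K,n,p))^{1/\ga^*_K}$ over subgraphs $K\sub H$, exhibiting one subgraph $K$ (your ``maximal clique $K_{k-m}$'') whose term is large gives only an \emph{upper} bound on $M_H$; to lower-bound the minimum you must handle \emph{every} $K\sub H$ with $e_K>0$. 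Relatedly, the role of $H\neq K_k$ is not that some new large term becomes available, but the opposite: the dangerous term is the one for $K=K_k$ (for which $e_K=\gD\ga^*_K$ exactly), and the hypothesis guarantees that this term is absent and that every remaining $K$ satisfies the \emph{strict} inequality $e_K<\gD\ga^*_K$ (since $e_K\leq \gD v_K/2\leq\gD\ga^*_K$ with equality forcing $K=K_k$).

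Concretely, the paper shows for each $K\sub H$ that $n^{v_K-2\ga^*_K}p^{e_K-\gD\ga^*_K}=\gO(t^{\ga^*_K})$, using two facts: JOR's Lemma 6.1, $e_K\leq\gD(v_K-\ga^*_K)$, and the strictness $e_K-\gD\ga^*_K<0$ just mentioned. Substituting $p<n^{-(1+\gc)/\gD}$ (the exponent of $p$ being negative, the upper bound on $p$ gives a lower bound on the term) yields a lower bound of $n^{\gc(\gD\ga^*_K-e_K)/\gD}$, a fixed positive power of $n$, which absorbs $t^{\ga^*_K}=O((\log n)^{\ga^*_K})$. This computation --- in particular the interplay between the strictness of $e_K<\gD\ga^*_K$ and the exponent $\gc$ in the hypothesis on $p$ --- is what your proposal is missing, and without it the claimed $\log$-factor gain over the generic bound $M_H\geq n^2p^{\gD_H}$ is unsubstantiated.
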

\nin
\begin{proof}
By Lemma \ref{dk2} we may assume $\gD:=\gD_H = k-1$
(and will write $\gD$ in place of $k-1$ in this section).
By Proposition \ref{Equiv2} it's enough to show
$$\Pr(\xi_H^{n,p}\geq (1+\vt)\E(\xi_H^{n,p}))
< p^{\gO_{\vt,H}(n^2p^{\gD })},$$

\mn
which, in view of \eqref{jor} and the definition of $M_H(n,p)$, will follow if
we show that, for any $K\sub H$,
$n^{v_K}p^{e_K}=\gO((n^2p^{\gD }t)^{\ga^*_K})$, or, more conveniently,
\beq{nvK}
n^{v_K-2\ga^*_K}p^{e_K-\gD \ga^*_K}=\gO(t^{\ga^*_K}).
\enq

We need one easy observation from \cite{JOR} (see their Lemma 6.1):
$$%\beq{jor3}
e_K\leq \gD(v_K-\ga^*_K).
$$%\enq

\mn
Then, noting that
\beq{eK}
e_K-\gD \alpha_K^*<0
\enq
(since $e_K< \gD v_K/2\leq \gD \alpha_{K}^{*}$) and
using our upper bound on $p$,
we find that the
left side of \eqref{nvK} is at least
\begin{eqnarray*}
n^{v_K-2\ga^*_K -(1+\gc)(e_K-\gD\ga^*_K)/\gD }
&\geq &
n^{v_K-2\ga^*_K  - (v_K-2\ga^*_K )
+\gc (\gD\ga^*_K-e_K)/\gD }\\
&=&n^{\gc (\gD\ga^*_K-e_K)/\gD },
\end{eqnarray*}
which (again using \eqref{eK}) gives \eqref{nvK}.
\end{proof}

\section{Large deviations}\label{LD}

This section collects a few standardish large deviation basics
that will be used throughout the paper.
It's perhaps worth noting that these elementary inequalities
are the only ``machinery" we will need.

We use $B(m,\ga)$ for a random variable with the binomial distribution ${\rm Bin}(m,\ga)$.
The next lemma, which is easily derived from \cite[Theorem A.1.12]{Alon2} and \cite[Theorem 2.1]{JLR} respectively (for
example),
will be used repeatedly, eventually without explicit mention.

\begin{lemma}\label{L1}
There is a fixed $C>0$ so that for any $\lambda\leq 1$, $K>1+\lambda$, m and $\ga$,
%(and any $m$ and $\ga$),
\beq{bin}
\Pr(B(m,\ga) \geq Km\ga) <\min\{(e/K)^{Km\ga}, \exp[-C\lambda^2Km\ga]\}.
\enq
\end{lemma}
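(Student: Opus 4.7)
The plan is to deduce both halves of \eqref{bin} from a single classical Chernoff inequality for the binomial, namely
\[
\Pr(B(m,\ga) \geq K m\ga) \leq \exp[-\mu\, \phi(K)], \qquad \phi(K):=K\ln K-K+1,
\]
valid for all $K>1$, where $\mu:=m\ga$. This is exactly what the exponential Markov argument produces and is stated (for instance) in \cite[Theorem A.1.12]{Alon2} and in \cite[Theorem 2.1]{JLR}; I would simply invoke it and then read off the two resulting estimates for $\phi$.

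The first bound is immediate: rewriting $\phi(K)=K\ln K-(K-1)$ gives
\[
\exp[-\mu\phi(K)] \;=\; e^{-\mu}(e/K)^{K\mu} \;<\; (e/K)^{Km\ga},
\]
which is the first entry in the minimum.

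For the second bound I want $\phi(K) \geq C\lambda^2 K$ for some absolute $C>0$ whenever $K>1+\lambda$ and $\lambda\leq 1$. This follows from two elementary calculus facts: (i) the Taylor expansion $\phi(1+\lambda) = \lambda^2/2 - \lambda^3/6 + \cdots \geq \lambda^2/3$ for $\lambda\in[0,1]$; and (ii) the derivative computation $(\phi(K)/K)'=(K-1)/K^2>0$, which says $\phi(K)/K$ is increasing on $(1,\infty)$. Combining these gives
\[
\phi(K)/K \;\geq\; \phi(1+\lambda)/(1+\lambda) \;\geq\; \lambda^2/6
\]
for all admissible $K$ and $\lambda$, so $\exp[-\mu\phi(K)] \leq \exp[-\lambda^2 Km\ga/6]$, which is the second entry in the minimum (with $C=1/6$).

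There is no real obstacle here; the only mild point to verify is that $C$ can be chosen independent of $K$, $\lambda$, $m$ and $\ga$, and this is precisely what the monotonicity of $\phi(K)/K$ on $(1,\infty)$ delivers, anchored at $K=1+\lambda$ by the Taylor estimate.
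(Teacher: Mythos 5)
Your proposal is correct, and it is essentially the route the paper has in mind, except that the paper gives no proof at all: it simply cites \cite[Theorem A.1.12]{Alon2} for the first entry of the minimum and \cite[Theorem 2.1]{JLR} for the second. What you do differently is derive \emph{both} entries from the single entropy-form bound $\Pr(B(m,\ga)\geq Km\ga)\leq\exp[-m\ga\,\phi(K)]$ with $\phi(K)=K\ln K-K+1$; your two calculus facts (the Taylor bound $\phi(1+\lambda)\geq\lambda^2/3$ on $[0,1]$ and the monotonicity of $\phi(K)/K$) are exactly what is needed to recover the $\exp[-C\lambda^2Km\ga]$ form without a second citation, so your version is slightly more self-contained at the cost of a few lines of computation. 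Two pedantic points, neither a real gap: the argument (and indeed the lemma itself) tacitly requires $\lambda>0$, since for $\lambda\leq 0$ the second entry of the minimum is false for $K$ near $1$; and to get the stated \emph{strict} inequality one should note $e^{-m\ga}<1$ and take $C$ slightly below $1/6$, both of which are covered by the paper's standing convention that one may assume $Km\ga\geq 1$.
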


\nin
{\em Remark.}
We may assume $Km\ga \geq 1$.  Thus, if $em\ga^{c}<1$ then
$e/K<\ga^{1-c}$ and the bound in \eqref{bin} is at most
$\ga^{(1-c)Km\ga}$.

\medskip
The next lemma, an immediate consequence of Lemma \ref{L1} (and the above Remark),
will also be used repeatedly,
usually following a preliminary application of Lemma \ref{L1} to
justify the assumption $enq^c<1$.

\begin{lemma}\label{L2}
Fix $c<1$ and assume $enq^c<1$.
If
$S\subseteq V_i$ is random with $\Pr(x\in S)\leq q ~\forall x\in V_i$,
these events independent, then for any $T$,
$$\Pr(|S|\geq T)<q^{(1-c)T}.$$
%In particular, if $q^T\leq p^{\Omega_{\eps}(n^2p^{k-1})}$, then $|S|<^*T$.
\end{lemma}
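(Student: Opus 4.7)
The plan is a direct reduction to the first inequality of \eqref{bin}. Since the inclusion events $\{x \in S\}$ ($x \in V_i$) are independent, each of probability at most $q$, the count $|S|$ is stochastically dominated by $B(n,q)$, so it suffices to bound $\Pr(B(n,q) \geq T)$. The interesting range is $T \geq 1$; for $T < 1$ the conclusion follows trivially (if $T \le 0$ the right-hand side exceeds $1$, while for $0<T<1$ one uses $\Pr(|S|\geq 1)\leq nq$ together with the hypothesis $enq^c<1$ and the fact that $q<1$ implies $q^{1-c}<q^{(1-c)T}$).

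I would apply Lemma \ref{L1} with $m=n$, $\alpha=q$, and $K = T/(nq)$. The hypothesis $enq^c<1$ forces $nq<1$, so together with $T \geq 1$ we get $K>1$, and the first bound in \eqref{bin} yields $\Pr(B(n,q)\geq T) < (e/K)^T$. Now the Remark following Lemma \ref{L1} supplies exactly what is needed: since $Km\alpha = T \geq 1$ and $enq^c < 1$, we have $e/K \leq enq = (enq^c)\cdot q^{1-c} < q^{1-c}$, and raising to the power $T$ gives the claim.

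There is no real obstacle here; the lemma is essentially a repackaging of the Remark for convenient later use, and the only care required is the stochastic-domination reduction and a check that the hypotheses of Lemma \ref{L1} and of the Remark are indeed met.
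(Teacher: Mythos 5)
Your proof is correct and is exactly the argument the paper intends: the paper gives no explicit proof, stating only that the lemma is an immediate consequence of Lemma \ref{L1} and the Remark, and your reduction via stochastic domination by $B(n,q)$ with $K=T/(nq)$, followed by $e/K\leq enq=(enq^c)q^{1-c}<q^{1-c}$, is precisely that derivation. The edge-case check for $T<1$ is a sensible (if not strictly demanded) addition.
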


We also need the following inequality, which is an easy consequence of, for example, \cite[Lemma 8.2]{Beck-Chen}.

\begin{lemma}\label{Chern}
Suppose $w_1\dots w_m \in [0,z]$.
Let $\xi_1\dots \xi_m$ be independent Bernoullis,
$\xi = \sum \xi_i w_i$, and $\E\xi =\mu$.
Then for any $\eta >0$ and $\gl\geq \eta \mu$,
$$\Pr(\xi > \mu+\gl) < \exp [- \gO_\eta(\gl/z)].$$
\end{lemma}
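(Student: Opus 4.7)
The plan is to reduce to the standard Chernoff bound for weighted sums of Bernoullis and then inspect how the resulting rate function behaves on the range $\lambda\ge \eta\mu$.

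First I would rescale: set $w_i'=w_i/z\in[0,1]$, $\xi'=\xi/z=\sum w_i'\xi_i$, $\mu'=\mu/z$, $\lambda'=\lambda/z$. Then the desired bound becomes $\Pr(\xi'>\mu'+\lambda')<\exp[-\Omega_\eta(\lambda')]$, and we are now dealing with a weighted sum of independent Bernoullis with weights in $[0,1]$.

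Second, I would apply the standard moment-generating-function computation (this is precisely the content of the Beck--Chen lemma cited in the paper): for independent Bernoullis $\xi_i$ and $w_i'\in[0,1]$, one has $\E[e^{sw_i'\xi_i}]\le\exp(p_i(e^{sw_i'}-1))\le\exp(p_iw_i'(e^s-1))$ (using convexity of $e^{sx}$ on $[0,1]$), so $\E[e^{s\xi'}]\le\exp(\mu'(e^s-1))$. Optimizing $s$ in Markov's inequality, this yields the classical form
\[
\Pr(\xi'\ge(1+\delta)\mu')\le\exp[-g(\delta)\mu'],\qquad g(\delta):=(1+\delta)\log(1+\delta)-\delta.
\]

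Third, I would check that for $\delta\ge\eta>0$, $g(\delta)\ge c(\eta)\delta$. The key observation is that $g(\delta)/\delta$ is increasing on $(0,\infty)$: indeed,
\[
\frac{d}{d\delta}\frac{g(\delta)}{\delta}=\frac{\delta-\log(1+\delta)}{\delta^2}\ge 0.
\]
Hence $g(\delta)/\delta\ge g(\eta)/\eta=:c(\eta)>0$ whenever $\delta\ge\eta$. Setting $\delta=\lambda/\mu\ge\eta$ then gives
\[
\Pr(\xi>\mu+\lambda)\le\exp[-g(\delta)\mu']\le\exp[-c(\eta)\delta\mu']=\exp[-c(\eta)\lambda/z],
\]
which is the claimed inequality.

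There is no real obstacle; the content is just picking out the regime $\delta\ge\eta$ so that the Cram\'er rate function $g(\delta)$ dominates the linear function $c(\eta)\delta$, matching the target bound $\Omega_\eta(\lambda/z)$ rather than the weaker subgaussian $\Omega(\lambda^2/(z\mu))$ one would get from $g(\delta)\gtrsim\delta^2/(2+\delta)$. The monotonicity of $g(\delta)/\delta$ is what makes this work.
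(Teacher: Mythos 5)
Your proof is correct: the rescaling to weights in $[0,1]$, the standard moment-generating-function bound $\Pr(\xi'\geq(1+\delta)\mu')\leq \exp[-g(\delta)\mu']$ with $g(\delta)=(1+\delta)\log(1+\delta)-\delta$, and the observation that $g(\delta)/\delta$ is nondecreasing (so $g(\delta)\geq (g(\eta)/\eta)\,\delta$ for $\delta\geq\eta$, turning the rate into $c(\eta)\lambda/z$) together give exactly the claimed bound. The paper supplies no proof of this lemma, deferring entirely to the cited Lemma 8.2 of Beck--Chen, and your argument is precisely the standard derivation that citation stands in for, including the key point that in the regime $\lambda\geq\eta\mu$ the Cram\'er rate function is linear rather than quadratic.
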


\section{Outline}

In this section we list the steps in
the proof of Theorem \ref{GGThm}(a),
filling in some definitions as we go along.
The proof proceeds
by induction on (say)
$k^2 + e_H$, so that in proving the statement for $H$
we may assume its truth for all graphs with either fewer than $k$ vertices
or with $k$ vertices and fewer than $e_H$ edges.
The case $k=2$ is trivial and $k=3$ is the main result of \cite{DeM},
so we assume throughout that $k\geq 4$.

Most of the proof (Lemmas \ref{P1}-\ref{P4})
consists of identifying certain anomalies,
for example vertices of unusually high degree,
and bounding the number of copies of $H$ in which they appear.
The remaining copies are then easily handled
(in Lemma \ref{P5}) using Lemma \ref{Chern}.

Here and throughout we use $C$ and $C_\eps$ for (positive) constants depending on (respectively) $H$ and $(H,\eps)$, different occurrences of which will
usually denote different values.
Similarly, we use $\Omega$ and $\Omega_{\eps}$ as shorthand for
$\Omega_H$ and $\Omega_{H,\eps}$.  We say an event $E$ occurs {\em with large probability} (w.l.p.)
if $\Pr(E) > 1-\exp [-\Omega_{\eps} (n^2p^{k-1}t)]$, and write ``$\alpha<^*\beta$" for ``w.l.p. $\alpha<\beta$"
(where $\eps$ is as in the statement of the theorem).
Note that \eqref{plb} (with a suitable $C$) guarantees that an intersection of,
for example, $n^5$ w.l.p. events is itself a w.l.p. event,
a fact we will sometimes use without mention in what follows.

By Lemma \ref{dk2} we may assume $\Delta_H=k-1$.
We reorder the vertices of $H$ so that
$k-1=d(v_1)\geq d(v_2)\geq \ldots \geq d(v_k)$ and if $d(v_2)=k-2$ then $v_2\not\sim v_3$.  We set $A=V_1, B=V_2,C=V_3$ and {\em always take
$a,b$ and $c$ to be elements of
$A,B$ and $C$ respectively.}
For disjoint $X,Y\sub V$ we use $\nabla(X,Y)$ for the set of edges with one end in each of $X$ and $Y$, and $\nabla(X)$ for the set of edges with one end in $X$.
We use $N(x)$ for the neighborhood of (set of vertices adjacent to) a
vertex $x$.

For $K\subseteq H$ with vertex set $\{v_i:i\in T\}$ ($T\subseteq [k]$),
define a {\it copy of $K$} in $G$ ($=G(n,p,H)$)
to be a set of vertices $\{x_i:i\in T\}$
with $x_i \in V_i$ and $x_ix_j\in E(G)$ whenever $v_iv_j\in E(K)$.
For $x_1,x_2,\ldots, x_l$ vertices belonging to distinct $V_i$'s we
use $\ww_{K}(x_1,\ldots, x_l)$ for the number of copies of $K$
containing $x_1,\ldots,x_l$; when $K=H$ we call this the
{\em weight} of $\{x_1,\ldots,x_l\}$.
We use $H_{S}=H-\{v_i:i\in S\}$ ($S\subset [k]$),
and abbreviate $H_{\{i\}}=H_i$,
$\ww_{H_S}(\cdot)=\ww_{S}(\cdot)$, $\ww_{\{i\}}=\ww_i$
and $\ww_{\emptyset}(\cdot)$ ($=\ww_H(\cdot)$) $=\ww(\cdot)$.

Set $\vt=.05 \eps$ and
define $\delta$ by $(1+\delta)^k=2$.  For $x\in V$ and $i\in [k]$,
let $d_i(x)=|N(x)\cap V_i|$, and set $d(x)=\max\{d_i(x):i\in [k]\}$.
Say a vertex $x$ is {\it high degree} if $d(x)>(1+\delta)np$, and a copy of $H$
is {\it type one} if contains a high degree vertex from $A,B$ or $C$.

\begin{lemma}\label{P1}
 W.l.p. G contains less than $7\vt \Psi(H,n)$ type one copies of $H$.
\end{lemma}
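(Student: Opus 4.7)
The plan is to handle $A,B,C$ separately and show each contributes w.l.p.\ fewer than $(7\vt/3)\Psi(H,n)$ type one copies; a union bound then finishes. I describe $A$; the arguments for $B,C$ are parallel, using $d_H(v_2),d_H(v_3)\ge k-2$ and the assumed ordering of the $v_i$'s to play essentially the role that $d_H(v_1)=k-1$ plays for $A$. Stratify the high-degree vertices of $A$ dyadically in the factor $1+\delta$: for $s\ge 1$ set $A_s=\{a\in A:(1+\delta)^snp<d(a)\le (1+\delta)^{s+1}np\}$. Lemma \ref{L1} (with its Remark to cover large $s$) gives $\Pr(a\in A_s)\le q_s:=\min\{\exp[-\Omega(\delta^2(1+\delta)^{2s}np)],\ p^{\Omega((1+\delta)^snp)}\}$, and Lemma \ref{L2} then yields $|A_s|<^* T_s:=c_H\vt n\cdot 2^{-s}$, with the required verification $q_s^{(1-c)T_s}\le\exp[-\Omega_\eps(n^2p^{k-1}t)]$ reducing, via \eqref{plb} and $(1+\delta)^k=2$, to a direct calculation.

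For $a\in A_s$, since $v_1$ is adjacent to every $v_j$ ($j\ge 2$), each copy of $H$ through $a$ is determined by a copy of $H_1=H-v_1$ in $G[N_2(a),\dots,N_k(a)]$; $H_1$ has $k-1$ vertices and minimum degree $\ge k-3$, so it falls within the induction hypothesis. Applying Theorem \ref{GGThm}(b) for $H_1$ to this $(k-1)$-partite count (conditionally on the neighborhood sizes, with the dilation parameter $\tau$ tuned to produce the right failure probability) yields w.l.p.\ $\ww(a)\le O((1+\delta)^{s(k-1)}n^{k-1}p^{e_H})$ uniformly in $a\in A_s$. Combining with the $T_s$ bound, the $A$-contribution to the type one count is w.l.p.\ at most
$$\sum_{s\ge 1}T_s\cdot O\!\bigl((1+\delta)^{s(k-1)}n^{k-1}p^{e_H}\bigr)=O_H(\vt)\cdot\Psi(H,n)\cdot\sum_{s\ge 1}\bigl((1+\delta)^{k-1}/2\bigr)^s,$$
and the geometric series converges since $(1+\delta)^{k-1}=2/(1+\delta)<2$. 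Taking $c_H$ small in terms of $H$ produces the desired $(7\vt/3)\Psi(H,n)$ bound.

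The main obstacle is calibrating the inductive step for $\ww(a)$: the natural failure probability for the $H_1$-count at scale $(1+\delta)^{s+1}np$ is $\exp[-\Omega(f(k-1,(1+\delta)^{s+1}np,p))]$, which is by itself weaker than the target tolerance $\exp[-\Omega(n^2p^{k-1}t)]$, so one must either absorb a $\tau>1$ factor in (b) (at the cost of loosening the upper bound on $\ww(a)$) or else fall back on a union bound over $A_s$ powered by the second branch of $q_s$. In the pathological regime $f(k,n,p)=n^kp^{\binom{k}{2}}$ the tolerance becomes looser still, and there the argument requires checking that the slack in the target exponent suffices to carry the union bound over $a\in A_s$ without destroying the geometric sum over $s$.
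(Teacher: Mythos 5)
Your dyadic stratification works in the middle of the degree range but breaks down at both ends, and at the top end the missing ingredient is exactly the one nontrivial idea in the paper's proof. For the top strata: to get $|A_s|<^* T_s$ with $T_s=c_H\vt n2^{-s}$ you need $\Pr(|A_s|\geq T_s)\leq (nq_s)^{T_s}\leq\exp[-\Omega(n^2p^{k-1}t)]$, i.e.\ $T_s\log(1/q_s)=\Omega(n^2p^{k-1}t)$. But $\log(1/q_s)=O((1+\delta)^s np\,t)$, so $T_s\log(1/q_s)=O(c_H\vt\, n^2pt\,(1+\delta)^{-s(k-1)})$, and the requirement fails once $(1+\delta)^s\gg p^{-(k-2)/(k-1)}$ --- while degrees run up to $n$, i.e.\ $(1+\delta)^s$ up to $1/p$. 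Replacing $T_s$ there by the best bound the union bound can actually deliver, $T_s\approx np^{k-2}(1+\delta)^{-s}$, and multiplying by your weight bound $O((1+\delta)^{s(k-1)}n^{k-1}p^{e_H})$ gives a per-stratum contribution $O\bigl(((1+\delta)^sp)^{k-2}\Psi(H,n)\bigr)$, which at the top is $\Theta(\Psi(H,n))$, not $O(\vt\Psi(H,n))$. ``Cardinality times maximum weight'' cannot close this gap. The paper instead bounds the \emph{total degree} of the very-high-degree set $D_1(i)=\{x:d(x)>np^{2/5}\}$: w.l.p.\ $|\nabla(D_1(i))|<\vt n^2p^{k-1}$ (a union bound over all candidate sets $T$ of size $\vt np^{k-7/5}$, using the binomial tail for $|\nabla(T)|$ with a multiplier $p^{-3/5}$ over its mean), and then uses only the ratio bound $\ww(x)/d(x)\leq 2n^{k-2}p^{e_H-(k-1)}$, so that $\sum_{x\in D_1(i)}\ww(x)\leq 2\vt\Psi(H,n)$.

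The second gap is the one you flag yourself but do not resolve: for small $s$ the inductive bound on $\ww(a)$ at scale $(1+\delta)^snp$ only comes with failure probability $\exp[-\Omega(f(k-1,(1+\delta)^snp,p))]$, which is far above the w.l.p.\ threshold. Your first fix, inflating $\tau$ in Theorem \ref{GGThm}(b) until $f(k-1,\cdot)$ reaches $n^2p^{k-1}t$, forces $\tau\approx(p^{-1/2}(1+\delta)^{-s})^{k-1}$ and hence a weight bound $\approx n^{k-1}p^{e_H-(k-1)/2}$; times $T_s$ this gives $c_H\vt\Psi(H,n)2^{-s}p^{-(k-1)/2}$, whose sum over $s$ is off by a polynomial factor in $1/p$. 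The paper's resolution is again not a per-stratum count: it bounds every $x$ with $(1+\delta)np<d(x)\leq np^{2/5}$ by the single value $2\Psi(H_1,np^{2/5})$ (embedding $N(x)$ into parts of size $np^{2/5}$, so the failure probability is governed by $f(k-1,np^{2/5})$, which is large enough), and absorbs the resulting overcount using $|\nabla(D_2(i))|\leq kn^2p^{k-1}t$, the overcount factor $p^{2(k-1)/5-1}t$ being $o(1)$ for $k\geq4$. Finally, for $i=2,3$ with $d_H(v_i)=k-2$ the induction naturally bounds $\ww(x)$ against $\Psi(H_i,\cdot)=p\,\Psi(H_1,\cdot)$, and one needs the extra partition of the non-neighbor class $V_j$ into $\lfloor1/p\rfloor$ blocks; calling the $B$ and $C$ cases ``parallel'' hides a real step.
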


Let $A', B', C'$ denote the subsets of $A,B,C$ respectively of vertices which
are not high degree.  For vertices $x,y\in G$ let
$d_j(x,y)=|N(x)\cap N(y)\cap V_j|$ and $d(x,y)=\max_{j\geq 4}d_j(x,y)$.
A pair of vertices $(x,y)$ is
{\it high degree} if $d(x,y)>np^{3/2}$. For $k>4$ a copy of $H$ is
{\it type two} if it contains a high degree pair $(x,y)$ belonging to
either $A'\times C'$ or $B'\times C'$; for $k=4$ we don't need this, and
simply declare that there
are no copies of type two.
\begin{lemma}\label{P3}
 W.l.p. G contains less than $2\vt \Psi(H,n)$ type two copies of $H$.
\end{lemma}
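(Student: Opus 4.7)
For $k=4$ the lemma is vacuous since type-two copies are declared empty, so I assume $k\ge 5$. By symmetry it suffices to bound the number $X_2^{AC}$ of copies of $H$ containing a high-degree pair in $A'\times C'$; the contribution $X_2^{BC}$ from $B'\times C'$ is handled identically. My plan is the usual ``few anomalies $\times$ bounded weight'' decomposition: show that high-degree pairs in $A\times C$ are very rare, show that every low-degree pair $(a,c)\in A'\times C'$ has only modest weight $\ww(a,c)$, and combine.

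For the rarity, fix $(a,c)\in A\times C$ and $j\in\{4,\dots,k\}$ with $v_3v_j\in E(H)$ (if no such $j$ exists then $d(a,c)=0$ and there is nothing to do). Then $d_j(a,c)\sim\mathrm{Bin}(n,p^2)$, and Lemma \ref{L1} with $K=p^{-1/2}$ (and the Remark following it) yields
\[
\Pr(d_j(a,c)>np^{3/2})\le (e\sqrt p)^{np^{3/2}}\le \exp\bigl[-\Omega(np^{3/2}t)\bigr];
\]
a union bound over $j$ gives $\Pr((a,c)\text{ is high-degree})\le k\exp[-\Omega(np^{3/2}t)]$. For the weight, $a\in A'$ forces $|N(a)\cap V_i|\le(1+\delta)np$ for every $i\in T:=[k]\setminus\{1,3\}$, and every copy of $H$ through $\{a,c\}$ must pick its $V_i$-vertex from $N(a)\cap V_i$ (since $v_1$ has full degree), giving the purely deterministic bound $\ww(a,c)\le((1+\delta)np)^{k-2}$. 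Combining these two inputs produces the first-moment estimate
\[
\E X_2^{AC}\le n^2\cdot k\exp[-\Omega(np^{3/2}t)]\cdot((1+\delta)np)^{k-2}=O\bigl(n^kp^{k-2}\bigr)\exp[-\Omega(np^{3/2}t)],
\]
so $\E X_2^{AC}/\Psi(H,n)=O\bigl(p^{-(e_H-k+2)}\bigr)\exp[-\Omega(np^{3/2}t)]\ll\vt$ for $p\ge n^{-2/(k-1)}$ and $k\ge 5$, since $np^{3/2}\to\infty$ then.

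The main obstacle is promoting this first-moment estimate to a w.l.p.\ bound. Markov alone delivers the required failure probability $\exp[-\Omega(n^2p^{k-1}t)]$ only while $np^{3/2}\gtrsim n^2p^{k-1}$, roughly $p\lesssim n^{-2/(2k-5)}$; in the complementary (larger-$p$) regime genuine concentration is needed. My plan here is to expose edges in stages: first condition on all edges not incident to $V_j$, then on the $(A\cup C)$-to-$V_j$ edges, at which point the hd events are frozen and the low-degree sets $A',C'$ together with the deterministic weight bound above are determined. In the residual probability space the contribution of copies whose $V_j$-vertex is a fixed $y$ becomes a low-degree multilinear polynomial in the Bernoulli edges $yx_i$ for $i\in T\setminus\{j\}$; after one further stage of revelation to linearize, Lemma \ref{Chern} can be applied with per-Bernoulli bound $z$ controlled by the low-degree assumptions on $A',B',C'$ together with Lemma \ref{P1} and the inductive hypothesis of Theorem \ref{GGThm}(b) on smaller subgraphs of $H$, producing the concentration exponent $\Omega(n^2p^{k-1}t)$. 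Summing the analogous bound $X_2^{BC}<\vt\Psi$ then gives $X_2<2\vt\Psi(H,n)$ w.l.p.
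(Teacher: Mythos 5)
Your first-moment computation is fine as far as it goes, but, as you yourself observe, it converts to the required w.l.p.\ bound (failure probability $\exp[-\gO(n^2p^{k-1}t)]$) via Markov only when $np^{3/2}\gtrsim n^2p^{k-1}$, i.e.\ $p\lesssim n^{-2/(2k-5)}$; for the rest of the range your argument is an unexecuted sketch, and that sketch is precisely where the difficulty sits. The count of type-two copies is not linear in any single layer of Bernoullis, and after the heavy conditioning you propose, both the conditional mean and the per-Bernoulli increment $z$ needed for Lemma~\ref{Chern} must themselves be controlled by further w.l.p.\ events; you specify neither, nor do you verify that $\vt\Psi(H,n)/z=\gO(n^2p^{k-1}t)$. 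The paper avoids concentration of the count entirely by a different decomposition: it bounds the number of \emph{vertices} involved in high-degree pairs, setting $A''=\{a\in A':\exists c\in C',\ d(a,c)>np^{3/2}\}$ (and $C''$ similarly). Conditioned on $\nabla(C)$, the events $\{a\in A''\}$ are independent over $a\in A$, each of probability at most $q:=p^{0.4np^{3/2}}$, so Lemma~\ref{L2} gives $|A''|<np^{k-5/2}$ with failure probability $q^{\gO(np^{k-5/2})}=p^{\gO(n^2p^{k-1})}$ --- exactly the w.l.p.\ exponent, uniformly over the whole admissible range of $p$. This is the idea your proposal is missing.

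A second, quieter problem: your weight bound $\ww(a,c)\le((1+\delta)np)^{k-2}$ discards the factor $p^{e_{H_{\{1,3\}}}}$ coming from the edges of $H$ not meeting $v_1,v_3$. This is harmless in your expectation calculation (the exponentially small high-degree probability absorbs everything), but it would not survive the combination step in the paper's scheme: $(np^{k-5/2})^2((1+\delta)np)^{k-2}\approx n^kp^{3k-7}$, which for $H=K_5$ exceeds $\Psi(H,n)=n^5p^{10}$ by a factor $p^{-2}$. The paper instead proves, by an inductive appeal to Theorem~\ref{GGThm}(b), that w.l.p.\ $\ww(a,c)<2t\Psi(H_{\{1,3\}},(1+\delta)np)$ for all $(a,c)\in A'\times C'$, and the product of this with $(np^{k-5/2})^2$ is at most $4p^{k-4}t\,\Psi(H,n)<\vt\Psi(H,n)$. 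So both halves of the paper's argument differ from yours, and the half you left as a plan is the one that actually requires the new idea.
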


Set $s=\min\{t, n^{k-2}p^{\Cc{k-1}{2}}\}$,
the two regimes corresponding to the two ranges of $f(k,n,p)$
($= n^2p^{k-1}s$).
Define $\ww^*(\cdot)$ in the same way
as $\ww(\cdot)$, but with the count restricted to copies of
$H$ that are not type one or two.
Set
\beq{zeta}
\gz =\left\{\begin{array}{ll}
3^{k-2}\Psi(H,n,p)/(n^2p^{k-1}s)& \mbox{if $k\geq 5$}\\
225\Psi(H,n,p)/(n^2p^{3}s)&\mbox{if $k=4$}
\end{array}\right.
\enq

\mn
and (in either case) say
$ab\in \nabla(A,B)$ is {\em heavy} if
$\ww^*(a,b)>\gz.$
Finally, say a copy of $H$ is {\em type three} if it is not type one or two
and contains a heavy edge, and {\em type four} if it is not type one,
two or three.

\begin{lemma}\label{P4}
 W.l.p. G contains less
 than $4\vt \Psi(H,n)$ type three copies of $H$.
\end{lemma}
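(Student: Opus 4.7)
The plan is to bound $W := \sum_{ab\text{ heavy}} \ww^*(a,b)$ by reducing it to a subgraph count in the smaller graph $H_{\{1,2\}}$, which has $k-2<k$ vertices (so the inductive hypothesis applies) and minimum degree at least $(k-2)-2$ (so it still satisfies the hypothesis of Theorem~\ref{GGThm}). For a pair $(a,b)\in A'\times B'$ with $ab\in E(G)$, I would set $W_j := N(a)\cap N(b)\cap V_j$ for $j\geq 3$ in Case A (where $v_2\sim v_3$), or $W_3 := N(a)\cap V_3$ together with $W_j := N(a)\cap N(b)\cap V_j$ for $j\geq 4$ in Case B. Then $\ww(a,b)$ equals the number of copies of $H_{\{1,2\}}$ on the parts $W_3,\ldots,W_k$, each of size at most $(1+\delta)np$ since $a,b$ are low-degree; in particular $\ww^*(a,b)\leq\ww(a,b)$.

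Next, I would apply the induction. For $k\geq 5$, Theorem~\ref{GGThm}(b) applied to $H_{\{1,2\}}$ gives, for any $\tau\geq 1$,
$$
\Pr\bigl[\ww(a,b)\geq 2\tau \Psi(H_{\{1,2\}}, (1+\delta)np)\bigr] \leq \exp\bigl[-\Omega\bigl(f(k-2,\,(1+\delta)np\cdot\tau^{1/(k-2)},\,p)\bigr)\bigr].
$$
A short calculation gives $\Psi(H_{\{1,2\}},(1+\delta)np)\leq 2n^{k-2}p^{e_H-k+1}$, so matching $2\tau\Psi(H_{\{1,2\}},(1+\delta)np) = 2^j\gz$ yields $\tau_j = \Omega(2^j\cdot 3^{k-2}/s)$. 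For $k=4$, $H_{\{1,2\}}$ has only two vertices and $\ww(a,b)$ is essentially a binomial, so I would apply Lemma~\ref{Chern} directly in place of the induction.

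Then, I would decompose $W$ dyadically: $W\leq\sum_{j\geq 0}2^{j+1}\gz\cdot N_j$, where $N_j$ counts edges $ab\in\nabla(A',B')\cap E(G)$ with $\ww^*(a,b)\in[2^j\gz,\,2^{j+1}\gz)$. Each $N_j$ is a sum of $n^2$ Bernoulli indicators, each of probability at most $p\cdot q_j$ where $q_j$ is the tail bound from the previous step; Lemma~\ref{L2} then gives $N_j<^*$ its expectation plus slack, and the geometric decay of $q_j$ in $j$, combined with the calibration built into the definition of $\gz$, yields $\sum_j 2^{j+1}\gz N_j < 4\vt\Psi$ w.l.p.

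The main obstacle will be the parameter calibration, especially in the regime where the base deviation $\tau_0\sim 3^{k-2}/s$ sits close to (or below) $1$, which can happen when $s$ is large. There the direct induction on $H_{\{1,2\}}$ does not give enough, and one must exploit the type-two exclusion — which forces $|N(a)\cap N(x_3)\cap V_j|\leq np^{3/2}$ and hence shrinks the relevant parts — and then apply induction to the still smaller graph $H_{\{1,2,3\}}$. The constants $3^{k-2}$ (for $k\geq 5$) and $225$ (for $k=4$) in the definition of $\gz$ are chosen so that, after this reduction, the resulting $f(k-2,\cdot)$ or $f(k-3,\cdot)$ exponent still dominates the w.l.p.\ benchmark $\Omega(n^2 p^{k-1}t)$ after the union bound over the $O(n^2)$ pairs and the dyadic levels.
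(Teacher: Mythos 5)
There is a genuine gap, and it is structural rather than a matter of calibration. Your scheme rests on a per-pair tail bound $q_j=\Pr(\ww(a,b)\geq 2^j\gz)$ followed by a dyadic union bound, but no per-pair bound can be strong enough here: heavy edges \emph{do} exist w.l.p. (the probability that a fixed pair is heavy works out to roughly $\exp[-\Theta(np^{(k-1)/2}/t^{O(1)})]$, which is enormously larger than the w.l.p.\ benchmark $\exp[-\gO(n^2p^{k-1}t)]$, since $np^{(k-1)/2}=\sqrt{n^2p^{k-1}}$). So the task is necessarily to control the \emph{number and location} of heavy edges, and your tool for that --- ``Lemma \ref{L2} then gives $N_j<^*$ its expectation plus slack'' --- does not apply: the indicators $\{ab\in E(G),\ \ww^*(a,b)\in[2^j\gz,2^{j+1}\gz)\}$ over pairs $(a,b)$ are far from independent, since the weights $\ww^*(a,b)$ and $\ww^*(a,b')$ (or $\ww^*(a',b)$) share all of $\nabla(V_i,V_j)$ for $i,j\geq 3$ as well as $\nabla(a,V_3\cup\cdots\cup V_k)$. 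Lemma \ref{L2} requires independence, and there is no off-the-shelf concentration result for these counts. This is exactly the difficulty the paper's proof is built around: it defines sets $W(A),W(B)$ of vertices involved in heavy edges via the quantity $\sum_{c\in C_b\cap N(a)}\ww_1(b,c)$, whose dependence on $a$ is \emph{only} through $\nabla(a,C)$; after conditioning on $E(G)\sm\nabla(A,C)$ the events $\{a\in W(A)\}$ become genuinely independent across $a$, Lemma \ref{Chern} bounds each one (with the weights $\ww_1(b,c)$ controlled by the type-two exclusion and induction on $H_{\{1,2,3\}}$), and Lemma \ref{L2} then legitimately bounds $|W(A)|$. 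One then bounds $|\nabla(W(A),W(B))|$ and multiplies by a uniform bound on $\ww^*(a,b)$, rather than summing per-edge tail bounds.

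A secondary problem: your base case of the dyadic decomposition applies Theorem \ref{GGThm}(b) with parts of size $(1+\delta)np$ and deviation $\tau_0\sim 3^{k-2}/s$, and as you note this is below $1$ in the main regime $s=t$, where the theorem says nothing; but the fix is not better calibration of constants. The relevant parts are really $N(a)\cap N(b)\cap V_j$, of size about $np^2$ (or $np^{3/2}$ after the type-two exclusion), and even with induction on $H_{\{1,2,3\}}$ the resulting exponent $f(k-2,\cdot)$ or $f(k-3,\cdot)$ does not dominate $n^2p^{k-1}t$ --- which is why the paper needs the two-stage argument above, plus a separate treatment when $np^{(k-1)/2}$ is small (where even the uniform bound $\ww^*(a,b)<2\Psi(H_{\{1,2\}},(1+\delta)np)$ cannot be obtained inductively and is replaced by an argument via Alon's theorem), and an entirely separate argument for $k=4$. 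In short, the independence-restoring definition of $W(A),W(B)$ is the missing idea, and without it the proposal does not go through.
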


\begin{lemma}\label{P5}
 With probability at least $1-\exp[-\Omega_{\eps}(f(k,n,p))]$ G contains less
 than $(1+2\vt) \Psi(H,n)$ type four copies of $H$.
\end{lemma}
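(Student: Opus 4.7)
To prove Lemma \ref{P5} I would condition on $\mathcal{F} := \sigma(G \setminus \nabla(A,B))$, the $\sigma$-algebra generated by all edges of $G$ outside $\nabla(A,B)$, and then handle the remaining $V_1$--$V_2$ Bernoullis via Lemma \ref{Chern}. For $(a,b) \in A \times B$, let $\hat N(a,b)$ count the tuples $(x_3, \ldots, x_k) \in V_3 \times \cdots \times V_k$ with $x_3 \in C'$, with neither $(a, x_3)$ nor $(b, x_3)$ a high-degree pair (vacuous for $k = 4$), and with every edge of $H$ except $v_1 v_2$ realized among $\{a, b, x_3, \ldots, x_k\}$ in $G$. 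Each condition refers only to edges disjoint from $\nabla(A,B)$, so $\hat N(a,b)$, and hence $W(a,b) := \min\{\hat N(a,b), \zeta\} \in [0,\zeta]$, is $\mathcal{F}$-measurable. Since a type-four copy with $\nabla(A,B)$-edge $ab$ requires $ab \in E(G)$, $a \in A'$, $b \in B'$, $x_3 \in C'$, neither $(a,x_3)$ nor $(b,x_3)$ high-degree, and $\ww^*(a,b) \leq \zeta$ (as $ab$ is not heavy), one obtains $X^{(4)} \leq \xi := \sum_{(a,b) \in A \times B} \mathbf{1}_{ab \in E(G)}\, W(a,b)$.

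Conditional on $\mathcal{F}$, the random variable $\xi$ is a sum of independent $p$-Bernoullis weighted by the fixed values $W(a,b) \in [0,\zeta]$, with mean $\mu := p \sum_{a,b} W(a,b)$. Assuming $\mu \leq (1 + \vartheta) \Psi(H,n,p)$ w.l.p., Lemma \ref{Chern} applied with $\lambda = \vartheta \Psi$ and $z = \zeta$ gives $\Pr(\xi > \mu + \vartheta \Psi \mid \mathcal{F}) < \exp[-\Omega_\vartheta(\vartheta \Psi / \zeta)] = \exp[-\Omega(n^2 p^{k-1} s)] = \exp[-\Omega(f(k,n,p))]$, where the second equality uses the definition \eqref{zeta} of $\zeta$. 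Unwinding the conditioning then yields $\Pr(X^{(4)} \geq (1+2\vartheta) \Psi) \leq \Pr(\mu > (1+\vartheta)\Psi) + \Pr(\xi > \mu + \vartheta\Psi) \leq \exp[-\Omega_\eps(f(k,n,p))]$.

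The main obstacle is verifying the claimed bound on $\mu$. Since $\mu \leq p \sum_{a,b} \hat N(a,b)$ and the right-hand sum, discarding the degree and pair-degree restrictions, is at most the count $X^{n,p}_{H'}$ of copies of $H' := H - v_1 v_2$, the task reduces to showing $X^{n,p}_{H'} \leq (1+\vartheta) n^k p^{e_H - 1}$ w.l.p. When $d_H(v_2) = k - 1$, the complement $\bar{H'}$ is still a matching, so $H'$ has minimum degree at least $k - 2$ and the inductive hypothesis (Theorem \ref{GGThm}(a) applied to $H'$, which has the same $k$ but one fewer edge than $H$) delivers the bound. The delicate case is $d_H(v_2) = k - 2$, which by our chosen ordering forces $v_2 \not\sim v_3$: here $\bar{H'}$ is a path of length two and $H'$ has minimum degree $k - 3$, so induction does not apply directly. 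In that case one has to use the degree conditions already built into $\hat N(a,b)$---namely $x_3 \in C'$ and the pair-degree caps $d_j(a,x_3), d_j(b,x_3) \leq np^{3/2}$ for $j \geq 4$---together with the high-probability bounds from Lemmas \ref{P1}--\ref{P4} to control $\sum_{a,b} \hat N(a,b)$ directly.
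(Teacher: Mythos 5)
Your overall architecture is the same as the paper's: condition on $E(G)\sm\nabla(A,B)$, replace the weight of each pair $(a,b)$ by a truncation at $\gz$ that is measurable with respect to the conditioning, note that the type-four count is dominated by the resulting weighted sum of independent $p$-Bernoullis, and finish with Lemma \ref{Chern}, which gives exactly $\exp[-\gO_\vt(\Psi(H,n)/\gz)]=\exp[-\gO_\vt(n^2p^{k-1}s)]$. (The paper works with $\min\{\ww'(a,b),\gz\}$ rather than your $\hat N(a,b)$; the extra degree restrictions you build into $\hat N$ are harmless but also unnecessary, since $\ww^*(a,b)\le\ww'(a,b)\mathbf{1}_{\{ab\in E(G)\}}$ already.)

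The genuine gap is in the case $d_H(v_2)=k-2$. You correctly observe that $H'=H-v_1v_2$ then has minimum degree $k-3$, so the inductive instance of Theorem \ref{GGThm}(a) is unavailable, but you do not actually prove the mean bound there: ``use the degree conditions built into $\hat N(a,b)$ together with Lemmas \ref{P1}--\ref{P4} to control $\sum_{a,b}\hat N(a,b)$ directly'' is a plan, not an argument, and it is not clear it would succeed (the pair-degree caps involve only $x_3$, which gives no obvious handle on the full $(k-2)$-fold count). The resolution is much simpler and is already in the paper's toolkit: when $d_H(v_2)=k-2$, every vertex of $H$ other than $v_1$ has degree exactly $k-2$ (the complement matching covers all of $v_2\dots v_k$), so deleting $v_1v_2$ drops $d(v_1)$ to $k-2$ and hence $\gD_{H'}=k-2$. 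Lemma \ref{dk2} therefore applies to $H'$ and gives $X_{H'}^{n,p}< (1+\vt)\Psi(H',n,p)$ outside probability $p^{\gO(n^2p^{k-1})}$; note that Lemma \ref{dk2} rests on \eqref{jor} and Proposition \ref{Equiv2}, not on the induction, so there is no circularity and no minimum-degree hypothesis to check. With that substitution your argument closes; as written, the $d_H(v_2)=k-2$ case is unproved.
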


Of course Theorem \ref{GGThm}(a) (for $k\geq 4$) follows from
Lemmas \ref{P1}-\ref{P5}; these are proved in the next four sections.

\section{Proof of Lemma \ref{P1}}

\mn
For $i\in [3]$ set $D_1(i)=\{x\in V_i: d(x)> np^{2/5}\}$ and $D_2(i)=\{x\in V_i: np^{2/5}\geq d(x)> (1+\delta)np\}$, and for $j\in [2]$ set
$S_j(i)=\sum\limits\{d(x):x\in D_j(i)\}$.
We will show
\begin{prop}\label{wt(x)/d(x)}
For all $1\leq i\leq 3$,
\begin{eqnarray*}
\mbox{w.l.p. ~~~$\forall x\in D_j(i),$ ~~~} \ww(x)/d(x)<\left\{\begin{array}{ll}
2n^{k-2}p^{e_H-(k-1)} &\mbox{if $j=1$}\\
2n^{k-2}p^{e_H-k+2(k-1)/5} &\mbox{if $j=2$}
\end{array}\right.
\end{eqnarray*}
\end{prop}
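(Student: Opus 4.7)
My plan is to fix $i\in\{1,2,3\}$ and $x\in V_i$; a union bound over the $3n$ choices of $x$ will then reduce the claim to showing that, for each individual $x$, the stated bound on $\ww(x)/d(x)$ fails with probability $\exp[-\Omega_\eps(n^2 p^{k-1} t)]$. First I condition on the edges incident to $x$ --- equivalently on the sets $N(x)\cap V_l$ --- so that $d_l(x)$ and $d(x)$ become known; after this conditioning, $\ww(x)$ has the distribution of the number of copies of $H':=H-v_i$ in the $(k-1)$-partite random subgraph whose vertex classes are $V_l^x:=N(x)\cap V_l$ if $v_l\sim_H v_i$ and $V_l^x:=V_l$ otherwise. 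Writing $d:=d_H(v_i)\in\{k-1,k-2\}$ and using that $\ov H$ is a matching (so at most one $l$ falls into the second case), I get
\[
\E[\ww(x)\mid N(x)]\;\leq\;d(x)^d\cdot n^{k-1-d}\cdot p^{e_H-d},
\]
and a crude calculation (using $d(x)\leq n$ and $p<c$) shows the right side is dominated, with a factor of $p^{\Omega(1)}$ to spare, by $d(x)$ times the RHS of the proposition, in both the $D_1(i)$ and $D_2(i)$ regimes.

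To upgrade this mean estimate to a w.l.p.\ tail bound, I plan to invoke Theorem \ref{GGThm}(b) inductively for $H'$: this is legitimate since $H'$ has $k-1$ vertices (and $\ov{H'}$ is still a matching), so $H'$ lies strictly below $H$ in the induction on $k^2+e_H$. In the main case $d=k-1$, every $V_l^x$ has size at most $d(x)$, so $\ww(x)$ is stochastically dominated by $X_{H'}^{d(x)}$, and I choose $\tau=n^{k-2}/d(x)^{k-2}$ so that $2\tau\,\Psi(H',d(x))$ matches $d(x)$ times the target. Then $\tau\geq 1$ (as $d(x)\leq n$) and, for $x\in D_1(i)$,
\[
d(x)\,\tau^{1/(k-1)}\;=\;d(x)^{1/(k-1)}\,n^{(k-2)/(k-1)}\;\geq\;n\,p^{2/(5(k-1))},
\]
which after a short computation gives $f(k-1,d(x)\tau^{1/(k-1)})\geq\Omega(n^2p^{k-1}t)$; the $D_2(i)$ bound follows analogously using $d(x)>(1+\delta)np$.

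The main obstacle is the case $d=k-2$, where the unique ``free'' part $V_{j_0}^x=V_{j_0}$ has size $n$ (here $v_{j_0}$ is the non-neighbor of $v_i$ in $H$) and the simple domination $\ww(x)\leq X_{H'}^{d(x)}$ is unavailable. To handle it I exploit the fact that, $\ov H$ being a matching, $d_H(v_{j_0})=k-2$, so $v_{j_0}$ is a universal vertex of $H'$. Writing $H'':=H'-v_{j_0}$ gives the decomposition
\[
\ww(x)\;=\;\sum_{y\in V_{j_0}}c_y,\qquad c_y\;:=\;\#\{\text{copies of }H''\text{ on parts }N(x)\cap N(y)\cap V_l,\ l\neq i,j_0\},
\]
in which every relevant part has size at most $d(x)$. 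The plan is then to apply Theorem \ref{GGThm}(b) to $H''$ (now on $k-2$ vertices) on the $(k-2)$-partite graph of part-size $d(x)$ in order to control the ``inner'' copies $\sigma$ of $H''$, and then invoke binomial concentration (Lemma \ref{L1}) for the completion counts $|S_\sigma|=|\bigcap_l N(y_l)\cap V_{j_0}|\sim\mathrm{Bin}(n,p^{k-2})$ on $V_{j_0}$; the $p^{\Omega(1)}$ slack from the mean calculation will absorb the ``factor of $2$'' in the proposition. A final union bound over the $3n$ choices of $x$ completes the argument.
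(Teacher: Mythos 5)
Your main case $d_H(v_i)=k-1$ is essentially the paper's argument: condition on $\nabla(x)$, dominate $\ww(x)$ by $X_{H_i}^{d(x)}$, and apply the inductive hypothesis (the paper uses Theorem \ref{GGThm}(a) with the factor $2$ at size $d(x)$, which even gives the stronger bound $2d(x)^{k-2}p^{e_H-(k-1)}$; your use of (b) with $\tau=(n/d(x))^{k-2}$ is an equivalent repackaging). One small inaccuracy there: for $x\in D_1(i)$ with $d(x)$ close to $n$ the mean calculation leaves only a factor $2(n/d(x))^{k-2}\geq 2$ of room, not ``a factor of $p^{\Omega(1)}$ to spare''; this doesn't hurt you, since the factor $2$ is exactly what the induction delivers.

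The genuine gap is in the case $d_H(v_i)=k-2$. Your decomposition $\ww(x)=\sum_\sigma|S_\sigma|$ over inner copies $\sigma$ of $H''$ is correct, but the proposed finishing step --- ``invoke binomial concentration (Lemma \ref{L1}) for the completion counts $|S_\sigma|\sim\mathrm{Bin}(n,p^{k-2})$'' --- does not work as stated. The individual completion sets have mean $np^{k-2}$, which for $p$ near $n^{-2/(k-1)}$ and $k\geq 4$ is far below $1$, so there is no per-$\sigma$ concentration; and a union bound over the (up to $\approx d(x)^{k-2}p^{e_{H''}}$) copies $\sigma$ at the required error level $\exp[-\Omega(n^2p^{k-1}t)]$ would force a per-$\sigma$ threshold of order $n^2p^{k-1}$, which multiplied by the number of $\sigma$'s vastly exceeds the target. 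The $|S_\sigma|$ are also heavily dependent, so one cannot treat the sum as a single binomial either; controlling $\sum_y c_y$ via Lemma \ref{Chern} would require a uniform bound on $\max_y c_y$, i.e.\ control of the codegrees $|N(x)\cap N(y)\cap V_l|$ and a further inductive count inside them --- essentially a second layer of the ``high-degree pair'' analysis that your sketch does not supply. The paper avoids all of this with a much simpler device: partition the free part $V_{j_0}$ into $\lfloor 1/p\rfloor$ blocks $P_\ell$ of size at most $(1+\delta)np\leq d(x)$, observe that the copies of $H$ through $x$ meeting $P_\ell$ are counted by $X_{H_i}$ on a $(k-1)$-partite graph all of whose parts have size at most $d(x)$, apply the $(k-1)$-vertex induction to each block, and union bound over the $\lfloor 1/p\rfloor$ blocks; the extra factor $p^{-1}$ is exactly absorbed because $\Psi(H_1,\cdot)=p^{-1}\Psi(H_i,\cdot)$, i.e.\ the target in this case is $p^{-1}$ times larger. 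You should replace your completion-count step with this blocking argument (or else carry out the codegree control in full).
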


\nin
and
\begin{prop}\label{S_j(i)}
For all $1\leq i\leq 3$,
\beq{Sji}%\begin{eqnarray*}
\mbox{w.l.p. ~~~$S_j(i)$}<\left\{\begin{array}{ll}
\vt n^2p^{k-1} &\mbox{if $j=1$}\\%\label{S1i}
kn^2p^{k-1}t &\mbox{if $j=2$.}%\label{S2i}
\end{array}\right.
\enq%\end{eqnarray*}
\end{prop}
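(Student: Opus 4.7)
The plan is to bound $S_j(i)$ separately for $j=1,2$ by decomposing over the direction of the ``offending'' degree. For each neighbor $j'$ of $i$ in $H$, set $D_j^{j'}(i)=\{x\in V_i:d_{j'}(x)\text{ lies in the }D_j\text{-range}\}$; since $D_j(i)\subseteq\bigcup_{j'}D_j^{j'}(i)$ (with $j'$ ranging over the at most $k-1$ neighbors of $v_i$ in $H$), it will be enough to control each $|D_j^{j'}(i)|$ w.l.p.\ and combine with a uniform upper bound on $d(x)$ on $D_j(i)$. The main point is that for fixed $j'\neq i$, the events $\{x\in D_j^{j'}(i)\}$, $x\in V_i$, are mutually independent (they depend on disjoint edge slots), so Lemma \ref{L2} applies cleanly once we have a per-vertex probability bound from Lemma \ref{L1}.

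For $j=2$: by Chernoff (Lemma \ref{L1}) applied to $d_{j'}(x)\sim\mathrm{Bin}(n,p)$ with threshold $(1+\delta)np$ and $\delta=\delta(k)$ a positive constant,
$\Pr(x\in D_2^{j'}(i))\le \exp[-\Omega(np)],$
using $np\to\infty$ (which follows from \eqref{plb}). Plugging this $q$ into Lemma \ref{L2} with $T=\Theta(np^{k-7/5}t)$ yields $|D_2(i)|<^* T$, since $T\log(1/q)=\Omega(n^2p^{k-2/5}t)\gg n^2p^{k-1}t$. Combined with the definitional bound $d(x)\le np^{2/5}$ on $D_2(i)$, this gives $S_2(i)\le np^{2/5}|D_2(i)|<^* kn^2p^{k-1}t$, as required.

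For $j=1$: now $d_{j'}(x)>np^{2/5}$, and Lemma \ref{L1} gives the much stronger per-vertex bound $q\le(ep^{3/5})^{np^{2/5}}$. Lemma \ref{L2} then produces a very small high-probability bound on $|D_1^{j'}(i)|$; in fact one can afford $T_1=\Theta(np^{k-7/5})$ and still get probability $\le\exp[-\Omega(n^2p^{k-1}t)]$. The subtlety is that the crude bound $d(x)\le n$ multiplies this $T_1$ by $n$, producing $n^2p^{k-7/5}$, which is $p^{-2/5}$ too large when $p$ is small. To close this gap I would decompose $D_1^{j'}(i)$ dyadically by degree level $L\in[np^{2/5},n]$: for each level $L$, apply Lemma \ref{L2} to the independent indicators $\mathbf{1}\{d_{j'}(x)\in[L,2L)\}$ with $q_L\le(enp/L)^L$ (so $\log(1/q_L)\ge L((3/5)t-1)$), and choose a threshold $T_L$ with $LT_L=\Omega(n^2p^{k-1})$ at each level. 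Summing the per-level contributions $\sum_L 2L\,T_L$ over $O(\log n)$ dyadic levels—with the $t$ factor in $\log(1/q_L)$ paying for the logarithm—gives $S_1(i)<^*\vt n^2p^{k-1}$.

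The main obstacle is thus the $j=1$ case: one must be careful to exploit the fact that vertices of unusually high degree are not merely rare, but both rare \emph{and} concentrated near the lower end of $[np^{2/5},n]$; a naive ``count $\times$ maximum degree'' estimate loses a $p^{-2/5}$ factor. The dyadic (or, equivalently, $L$-star based) argument sketched above is what makes the independence-plus-binomial-tail approach tight enough to meet the w.l.p.\ threshold $\exp[-\Omega(n^2p^{k-1}t)]$.
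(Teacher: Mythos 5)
Your first step --- decomposing by the direction of the offending degree so that the events $\{x\in D_j(i)\}$ become independent, getting a per-vertex tail bound from Lemma \ref{L1}, and feeding it into Lemma \ref{L2} to bound $|D_j(i)|$ --- is exactly what the paper does. The gap is in converting a bound on $|D_j(i)|$ into a bound on $S_j(i)$, and it occurs in both cases. For $j=2$ your arithmetic is backwards: with $q=\exp[-\Omega(np)]$ and $T=np^{k-7/5}t$ the failure probability is $\exp[-\Omega(n^2p^{k-2/5}t)]$, and since $p<1$ and $k-2/5>k-1$ this is \emph{larger} than the required $\exp[-\Omega(n^2p^{k-1}t)]$, not smaller; the smallest admissible threshold is $T=\Theta(np^{k-2}t)$, and then ``count times maximum degree'' gives only $np^{2/5}\cdot np^{k-2}t=n^2p^{k-8/5}t$, off by a factor $p^{-3/5}$. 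For $j=1$ the dyadic decomposition does not close the analogous gap: at each level $L$ the smallest threshold compatible with the w.l.p.\ requirement is $T_L=\Theta(n^2p^{k-1}t/\log(1/q_L))$, and since $\log(1/q_L)\approx L\log(L/(enp))\leq Lt$, each level contributes $LT_L=\Omega(n^2p^{k-1})$. The factor $t$ in $\log(1/q_L)$ is already spent meeting the threshold $\exp[-\Omega(n^2p^{k-1}t)]$ and cannot also pay for the $\Theta(\log(1/p))$ levels; for, say, $p\approx n^{-1/(k-1)}$ the levels that cannot be declared empty number $\Theta(\log n)$ and the sum is $\Theta(n^2p^{k-1}\log n)$, a log factor more than the required $\vt n^2p^{k-1}$, which would be fatal in the deduction of Lemma \ref{P1}.

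The missing idea is to bound $S_j(i)\leq|\nabla(D_j(i))|$ without ever capping individual degrees. Having shown w.l.p.\ $|D_1(i)|<r:=\vt np^{k-7/5}$ (resp.\ $|D_2(i)|<u:=np^{k-2}t$), the paper takes a union bound over all $\binom{n}{r}\leq n^r$ candidate sets $T$ of size $r$ and applies Lemma \ref{L1} to $|\nabla(T)|\sim B((k-1)rn,p)$, whose mean $(k-1)\vt n^2p^{k-2/5}$ sits a factor $p^{3/5}$ below the target $\vt n^2p^{k-1}$; the resulting tail $(e(k-1)p^{3/5})^{\vt n^2p^{k-1}}=p^{\Omega_\eps(n^2p^{k-1})}$ comfortably absorbs $n^r$ (and similarly for $j=2$ with target $kn^2p^{k-1}t$ and a constant-factor deviation). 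Treating the edge-boundary of the whole set as a single binomial is what recovers the $p^{3/5}$ (resp.\ the logarithmic factor) that any ``set size times degree cap'' accounting necessarily loses.
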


\mn
The lemma follows since
the number of type one copies of $H$ is at most
\begin{align*}
\sum_{x: x \text{high degree}} \ww(x)&<^* \sum_{i=1}^{3} (S_1(i)\cdot 2n^{k-2}p^{e_H-(k-1)} + S_2(i)\cdot 2n^{k-2}p^{e_H-k+2(k-1)/5})\\
&<^* 3(2\vt \Psi(H,n)+ 2k\Psi(H,n)p^{2(k-1)/5-1}t)\\
&< 7\vt \Psi(H,n),
\end{align*}
using Propositions \ref{wt(x)/d(x)} and \ref{S_j(i)} for the
first and second inequalities.
\qqqed

\bn
{\em Proof of Proposition} \ref{wt(x)/d(x)}.
Fix $i$ and condition on $\nabla(V_i)$ (thus determining $D_1(i)$ and
$D_2(i)$).
If $d_H(v_i)=k-1$, then for any $x\in D_1(i)$,
induction gives
$$\Pr(\ww(x)\geq 2\Psi(H_i,d(x)))<\exp [-\gO(f(k-1,d(x)))],$$
whence (noting $\Psi(H_i,\cdot)=\Psi(H_1,\cdot)$)
\begin{align}\label{firstf}
\Pr(\exists x\in D_1(i): \ww(x)\geq 2\Psi(H_1,d(x)))
&< n\exp [-\gO(f(k-1,np^{2/5}))]\nonumber\\
&<p^{n^2p^{k-1}}.
\end{align}
Similarly,
\begin{align}\label{secondf}
\Pr(\exists x\in D_2(i): \ww(x)\geq 2\Psi(H_1,np^{2/5}))
&<  n \Pr(X_{H_i}^{np^{2/5}}\geq 2\Psi(H_i,np^{2/5}))\nonumber\\
&< n\exp [-\gO(f(k-1,np^{2/5}))]\nonumber\\
&<p^{n^2p^{k-1}}
\end{align}
Note that, here and throughout, we omit the routine verifications
of inequalities like those in the last lines of
\eqref{firstf} and \eqref{secondf}.

\medskip
If $d(v_i)=k-2$, then
$v_i\not\sim v_j$ for some $j\in [k]$.  We
partition $V_j= P_1\cup \cdots \cup P_{\lfloor 1/p\rfloor}$ with
each $P_{\ell}$ of size at most $(1+\delta)np$,
and write $\ww ^{\ell}(x)$ for the number of copies of $H$ containing $x$
and meeting
%$V_j$ in a vertex of
$P_{\ell}$.  Noting that here
$\Psi(H_1,\cdot)=p^{-1}\Psi(H_i,\cdot)$
(and $\ww(x)=\sum_{\ell}\ww^\ell(x)$), we have
\begin{align*}
\Pr\left( \ww (x)\geq 2\Psi(H_1,d(x))\right)
&<\Pr(\exists \ell ~\ww^{\ell}  (x)\geq 2\Psi(H_i,d(x)))\\
&<p^{-1}\exp\left[-\gO( f(k-1,d(x)))\right]
\end{align*}
for a given $x$, so that
\begin{align}\label{thirdf}
\Pr\left(\exists x\in D_1(i): \ww (x)\geq 2\Psi(H_1,d(x))\right)
&< np^{-1} \exp\left[-\gO( f(k-1,np^{2/5}))\right]\nonumber\\
&< p^{n^2p^{k-1}},
\end{align}
and
\begin{align}\label{fourthf}
\Pr(\exists x\in D_2(i): \ww (x)\geq 2\Psi(H_1,np^{2/5}))
&< np^{-1}\Pr(X_{H_i}^{np^{2/5}}\geq 2\Psi(H_i,np^{2/5}))\nonumber\\
&< np^{-1}\exp[-\gO( f(k-1,np^{2/5}))]\nonumber\\
&< p^{n^2p^{k-1}}.
\end{align}

\mn
Finally, \eqref{firstf}-\eqref{fourthf} imply that w.l.p.
\begin{align*}
\ww(x)/d(x)< 2\Psi(H_1,d(x))/d(x)&= 2 (d(x))^{k-1}p^{e_H-(k-1)}/d(x)\\
&\leq 2n^{k-2}p^{e_H-(k-1)}  ~~~~~\forall x\in D_1(i)
\end{align*}
and
\begin{align*}
\ww(x)/d(x)< 2\Psi(H_1,np^{2/5})/d(x)
& = 2 (np^{2/5})^{k-1}p^{e_H-(k-1)}/d(x)\\
&\leq2n^{k-2}p^{e_H-k+2(k-1)/5}  ~~~~~\forall x\in D_2(i).
\end{align*}

\qed

\mn
{\it Proof of Proposition \ref{S_j(i)}.}
We bound $|\nabla(D_j(i))|$, which is, of course,
an upper bound on $S_j(i)$.
We first assert that, for any $i\in [3]$, w.l.p.
\beq{D_j(i)}
|D_1(i)|<
\vt np^{k-7/5} ~~\mbox{and}~~
|D_2(i)| < np^{k-2}t.
\enq
This will follow from Lemmas \ref{L1} and \ref{L2} (so really two applications
of Lemma \ref{L1}), a combination we will see repeatedly.
For a given $i$ and $j$ the events
$\{x\in D_j(i)\}$ ($x\in V_i$) are independent with (using Lemma \ref{L1})
$$%\begin{eqnarray*}
\Pr\left(x\in D_1(i)\right)< k
\Pr(B(n,p)> np^{2/5}) <
k(ep^{3/5})^{np^{2/5}}< p^{0.5np^{2/5}}
$$
and
$$%\begin{eqnarray*}
\Pr\left(x\in D_2(i)\right)<
k \Pr(B(n,p)> (1+\delta)np )
<\exp[-\gO(np)].
$$
An application of Lemma \ref{L2} now shows that (\ref{D_j(i)}) holds
w.l.p.\qqqed

\medskip
Assume then that \eqref{D_j(i)} holds, and for convenience rename its
bounds
$\vt np^{k-7/5}=r$ and $np^{k-2}t=u$;
we may of course assume $r\geq 1$ if proving the first bound in
\eqref{Sji} and $u\geq 1$ if proving the second.
We have (a bit crudely)
\begin{align*}
\Pr(|\nabla(D_1(i))|\geq \vt n^2p^{k-1}) &<
\Pr(\exists T\in \Cc{V(i)}{r}: |\nabla(T)|\geq \vt n^2p^{k-1})\\
&<\Cc{n}{r}  \Pr (B((k-1)rn,p)\geq \vt n^2p^{k-1})\\
&< n^r (e(k-1)p^{3/5})^{\vt n^2p^{k-1}}\\
&< p^{\gO_{\eps}(n^2p^{k-1})}
\end{align*}
and
\begin{align*}
\Pr(|\nabla(D_2(i))|\geq  kn^2p^{k-1}t)
&<   \Pr(\exists T\in \Cc{V(i)}{u}: |\nabla(T)|\geq kn^2p^{k-1}t)\\
&<
\Cc{n}{u}  \Pr(B((k-1)un,p)\geq k n^2p^{k-1}t)\\
&<n^u \exp[-\gO(n^2p^{k-1}t)]\\
&< p^{\gO(n^2p^{k-1})},
\end{align*}
with the third inequality in each case given by Lemma \ref{L1}.\qed

\section{Proof of Lemma \ref{P3}}

(Here we are only interested in $k\geq 5$.)
We bound the contribution of high-degree $(A',C')$-pairs,
the argument for $(B',C')$-pairs being similar.

\mn
Let $A''$ be the (random) set of vertices of $A'$ involved in
high-degree $(A',C')$-pairs---that is,
$A''=\{a \in A': \exists c\in C' ~d(a,c)>np^{3/2}\}$---and define $C''$ similarly.
We will show that
\beq{BC1}
\mbox{{\em w.l.p.} $~|A''|,|C''| < np^{k-5/2}$}
\enq
and
\beq{BC2}
\mbox{{\em w.l.p.} $~\ww(a,c) < 2t\Psi(H_{\{1,3\}},(1+\delta)np)
~~\forall (a,c)\in A'\times C'$.}
\enq
Combining these we find that the total weight of high degree
$(A',C')$-pairs is w.l.p. at most
$$
(np^{k-5/2})^2 2t\Psi(H_{\{1,3\}},(1+\delta)np)
~<~4n^2p^{3k-7}t \Psi(H_{\{1,3\}},n)
~<~\vt \Psi(H,n),
$$
where the second inequality uses
$\Psi(H_{\{1,3\}},n)\leq n^{-2}p^{-(2k-3)}\Psi(H,n)$ and
$4p^{k-4}t<\vt$ (see \eqref{pub}).
Since, as noted above, the
same argument shows that the contribution of high-degree
$(B',C')$-pairs is w.l.p.
at most $\vt \Psi(H,n)$, the lemma follows.

\mn
{\em Proof of \eqref{BC1}.}
Given $\nabla(C)$, the events $\{a\in A''\}$ are independent, with
\begin{eqnarray*}
\Pr\left(a\in A''\right)
&< &n(k-2)\Pr[B((1+\delta)np,p)>np^{3/2}]\\
&< & n(k-2)(e(1+\delta)p^{1/2})^{np^{3/2}} ~< ~p^{0.4np^{3/2}}~=: ~q,
\end{eqnarray*}

\mn
where we use \eqref{plb}, \eqref{pub} and $k\geq 5$ for the last inequality.
Thus, since $enq^{1/2}<1$, Lemma \ref{L2} gives
\eqref{BC1} for $A''$, and
of course the same argument applies to $C''$.\qqqed

\mn
{\em Proof of \eqref{BC2}.}
Here we have lots of room and just bound $\max\{\ww_3(a):a\in A'\}$,
a trivial upper bound on $\max\{\ww(a,c):a\in A', c\in C'\}$.
Since $d(a)<(1+\delta)np$ (for $a\in A'$) and $v_1\sim v_{\ell}$
$\forall \ell\in [k]\sm\{2,3\}$,
Theorem \ref{GGThm}(b) gives (inductively)
$$%\begin{align*}
\Pr[\exists a\in A' ~\ww_3(a)\geq 2t \Psi(H_{\{1,3\}},(1+\delta)np)]
~~~~~~~~~~~~~~~~~~~~~~~~~~~~~
$$
$$~~~~~~~~~~~~~~~< n\exp [-\gO( f(k-2,(1+\gd)npt^{\frac{1}{k-2}}))]
< p^{\gO(n^2p^{k-1})}
$$%\end{align*}
(with verification of the second inequality, which does need
\eqref{plb} at one point, again left to the reader).\qqqed

\section{Proof of Lemma \ref{P4}}\label{PP4}

\mn
This requires special treatment when $k=4$;
see the beginning of Section \ref{k=4} for the reason for the split.
In Sections \ref{k=5} and \ref{k=4} we set
$A''=\{a: d_i(a)\leq (1+\delta)np ~\forall i\geq 3\}\supseteq A'$
and define $B''$ similarly.
%and $B'':=\{a: d_i(a)\leq (1+\delta)np ~\forall i\geq 3\}\supseteq B'$.

\subsection{Proof for $k\geq 5$}\label{k=5}

\medskip
For reasons that will be explained as we proceed, we need
somewhat different arguments for large and small values of $p$.

\mn
{\em Case} 1: $np^{(k-1)/2}\geq \log^4 n$.
Let $C_b=\{c\in C\cap N(b): d(b,c)\leq np^{3/2}\}$ and
%\begin{align*}
%W(A)&=\{a: \exists b\in B', \sum_{c\in C_b\cap N(a)} \ww_1(b,c)> \gz\}\\
%&\supseteq \{a:\exists b,~\ww^*(a,b) > \gz\}
%\end{align*}
$$
W(A)~=~\{a: \exists b\in B'', \sum_{c\in C_b\cap N(a)} \ww_1(b,c)> \gz\}
~\supseteq ~\{a:\exists b,~\ww^*(a,b) > \gz\}
$$
(see \eqref{zeta} for $\gz$), and define $W(B)$ similarly.

\mn
{\em Remark.}
While it may seem more natural to define $W(A)$, $W(B)$ in
terms of $\ww(a,b)$ or $\ww^*(a,b)$, the present definition has
the advantage of not depending on $\nabla (A,B)$.
We will see something similar in Case 2.

\medskip
The point requiring most work here is
\beq{A''B''}
\mbox{w.l.p. $~~~|W(A)|, |W(B)| <
\vt np^{(k-1)/2}t^3.$}
\enq

\mn
Given this, the rest of the argument goes as follows.
According to
Lemma \ref{L1}, \eqref{A''B''} implies
\beq{nablaA''B''}
\mbox{w.l.p. $~~~|\nabla(W(A),W(B))| <
\vt n^2p^{k-1}$}
\enq
(since, given the inequality in \eqref{A''B''},
$|\nabla(W(A),W(B))|\sim B(m,p)$ for some $m < \vt^2 n^2p^{k-1}t^6$;
note the inequalities in \eqref{A''B''} and \eqref{nablaA''B''}
depend on separate sets of random edges).
On the other hand, an inductive application of Theorem \ref{GGThm}(b)
gives
\beq{w2*a}
\mbox{w.l.p. $~~~\ww^*(a,b) < 2\Psi(H_{\{1,2\}},(1+\delta)np)$
$~~~\forall a,b$}
\enq
(using the fact that we are in Case 1 and
noting that $d(a)>(1+\delta)np$ implies $\ww^*(a,b)=0$).

\mn
Finally, the combination of
\eqref{nablaA''B''} and \eqref{w2*a}
bounds the number of type three copies of $H$ by
$
\vt n^2p^{k-1}\cdot 2\Psi(H_{\{1,2\}},(1+\delta)np) < 4\vt\Psi(H,n).
$\qqqed

\medskip
Proofs of the two assertions in \eqref{A''B''} being similar,
we just deal with $W(A)$.
We first show
\beq{wlpbc}
\mbox{w.l.p. $~~~ \ww_1(b,c)<2tn^{k-3}p^{e_H-(3k-3)/2} =:\gc ~~~\forall b\in B''
~~\mbox{and}~ c\in C_b$}
\enq

\mn
and
\beq{wlpb}
\mbox{w.l.p. $~~~ \ww_1(b) < 4n^{k-2}p^{e_H-(k-1)}
~~~\forall b\in B''$.}
\enq
These will imply, via Lemma \ref{Chern}, that the events $\{a\in W(A)\}$ are
unlikely, and then \eqref{A''B''} will be an application of
Lemma \ref{L2}.

Each of \eqref{wlpbc} and \eqref{wlpb} is given (inductively) by
Theorem \ref{GGThm}(b), with small differences in arithmetic depending
on $d(v_2)$ and $d(v_3)$:  say we are
{\em in (a),(b)} or {\em (c)} according to whether
$(d(v_2),d(v_3))$ is
$(k-1,k-1)$, $(k-1,k-2)$ or $(k-2,k-2)$.

For \eqref{wlpbc} we first observe that,
given $\nabla(B\cup C)$ and $c\in C_b$,
$\ww_1(b,c)$ is stochastically
dominated by $X:=X(H_{\{1,2,3\}},np^{3/2})$
in (a) and (c),
and by the sum of $\lfloor 1/p\rfloor $ copies of
$X$   %$X(H_{\{1,2,3\}},np^{3/2})$
in (b).
(For the latter assertion, let $\ell$ be the index for which
$v_3\not\sim v_\ell$ and, recalling that $b\in B''$, partition
$N(b)\cap V_\ell=V_1\cup\cdots \cup V_{\lfloor 1/p\rfloor}$ with each block
of size at most $np^{3/2}$.)
Theorem \ref{GGThm}(b) thus
gives the upper bound
\beq{fk-3}
n^2\lfloor 1/p\rfloor \exp[-\gO(f(k-3,np^{3/2}t^{1/(k-3)})]<
p^{\gO(n^2p^{k-1})}
%\exp_p[-\gO(n^2p^{k-1})]
\enq
on either
$$\Pr(\exists b\in B'',c\in C_b: ~\ww_1(b,c) >
2t \Psi(H_{\{1,2,3\}},np^{3/2})) $$
(if we are in (a) or (c)) or
$$\Pr(\exists b\in B'',c\in C_b: ~ \ww_1(b,c) >
2 t\lfloor 1/p\rfloor \Psi(H_{\{1,2,3\}},np^{3/2})) $$

\mn
(if we are in (b)),
the inequality in \eqref{fk-3} holding because we are in Case 1.
(Note that in \eqref{fk-3} the $\lfloor 1/p\rfloor$ is needed only when
we are ``in (b),"
and the term involving $t$ only when $k=5$.)

To complete the proof of \eqref{wlpbc} it just remains to check that $\gc$
(recall this is the right hand side of \eqref{wlpbc})
is an upper bound on
$2 t\Psi(H_{\{1,2,3\}},np^{3/2})$ if we are in (a) or (c),
and on $2 t\lfloor 1/p\rfloor \Psi(H_{\{1,2,3\}},np^{3/2})$
if we are in (b).\qqqed

\medskip
The proof of \eqref{wlpb} is similar.
Here, because we are in Case $1$, Theorem \ref{GGThm}(b) gives the bound
$$n\lfloor 1/p \rfloor\exp[-\gO(f(k-2,(1+\gd)np)]<p^{\gO(n^2p^{k-1})}$$
on
$\Pr(\exists b\in B'' ~\ww_1(b)>2\Psi(H_{\{1,2\}},(1+\delta)np))$
if we are in (a) or (b),
and on
$\Pr(\exists b \in B''~\ww_1(b)>2\lfloor 1/p\rfloor \Psi(H_{\{1,2\}},(1+\delta)np))$
if we are in (c);
and it's easy to check that $2\Psi(H_{\{1,2\}},(1+\delta)np)$
or $2\lfloor 1/p\rfloor \Psi(H_{\{1,2\}},(1+\delta)np)$
(as appropriate) is less than
$4n^{k-2}p^{e_H-(k-1)}$.\qqqed

\medskip
Finally we return to \eqref{A''B''}.
Fix (and condition on) any
value of
$E(G)\sm \nabla(A,C)$ satisfying the inequalities in
\eqref{wlpbc} and \eqref{wlpb}.
It is enough to show that, under this conditioning and for any $a$,

\beq{PraWA}
\Pr(a\in W(A))<
%n\exp\left[-Cnp^{(k-1)/2}/t^2\right]\leq
\exp[-\gO(np^{(k-1)/2}/t^2)]=:q,
\enq
since then Lemma \ref{L2} implies, using  $enq^{1/2}<1$ and
the fact that the events $\{a\in W(A)\}$ are independent,
$$|W(A)|<^*\vt np^{(k-1)/2}t^3.$$

\mn
(The assertion $enq^{1/2}<1$ (or $enq^c<1$) imposes
the most stringent requirement on $p$ for Case 1.)

For \eqref{PraWA} we observe that \eqref{wlpb} gives
(for any $a$ and $b\in B''$)

$$\E \sum_{c\in C_b\cap N(a)} \ww_1(b,c)=p\sum_{c\in C_b} \ww_1(b,c)
\leq p~ \ww_1(b)< 4n^{k-2}p^{e_H-k+2}
< \gz/2,$$
whence, using Lemma \ref{Chern} with \eqref{wlpbc}, we have
\begin{align*}
\Pr(a\in W(A))&<
\Pr\left(\exists b\in B''~~\sum\{\ww_1(b,c):c\in C_b\cap N(a)\} >\gz\right)\\
&<n\exp[-\gO(\gz/\gc)]
~<~n\exp[-\gO(np^{(k-1)/2}/t^2)]\\
&<\exp[-\gO(np^{(k-1)/2}/t^2)].
\end{align*}\qed

\mn
{\em Case} 2:  $np^{(k-1)/2}<\log^4 n$.
Recall that
for very small $p$---in particular for $p$ in the
present range---and $H\neq K_k$, Theorem \ref{GGThm} is contained in
Lemma \ref{lowerp};
we may thus assume $H=K_k$. Let
$H'=H-v_1v_2$ and, writing $\ww'
$ for $\ww_{H'}$, set
\beq{W(A)}
W(A)~=~\{a: \exists b\in B'',  \ww'(a,b)> \gz\}
~\supseteq~ \{a:\exists b~\ww^*(a,b) > \gz\},
\enq
and define $W(B)$ similarly.
(We could also work directly with $\ww(a,b)$
and avoid the extra definitions; but the present treatment,
which we will see again below, is more natural
in that it allows us to ignore
the essentially irrelevant $\nabla(A,B)$.)

The argument here is similar to that for Case 1.
We again show that membership in
$W(A)$, $W(B)$ is unlikely, leading to
\beq{A''B''2}
\mbox{w.l.p. $~~~|W(A)|, |W(B)| <
\log^8 n$},
\enq
which, in view of Lemma \ref{L1}, again gives
\beq{nablaA''B''2}
\mbox{w.l.p. $~~~|\nabla(W(A),W(B))| <
\vt n^2p^{k-1}.$}
\enq

On the other hand we will show, by an argument somewhat
different from others seen here,
\beq{wwab}
\mbox{w.l.p. $~\ww^*(a,b) < n^{k-2}p^{\binom{k-1}{2}}~~~\forall a,b.$}
\enq

\mn
Combining this with \eqref{nablaA''B''2} gives Lemma \ref{P4}
(for the present case).

\mn
{\em Proof of} \eqref{A''B''2}.
Of course it's enough to prove the assertion for $W(A)$.
We first observe that
\beq{ww1*b}
\mbox{w.l.p. $~\ww_1(b)<2t\Psi(H_{\{1,2\}},(1+\delta)np)
< 4t\log^{4k-8}n=:m  ~~\forall b \in B'';$}
\enq
as elsewhere, this is given by an inductive application of
Theorem \ref{GGThm}(b), which says that, for any $b \in B''$,
\begin{align*}
\Pr(\ww_1(b)> 2t\Psi(H_{\{1,2\}},(1+\delta)np))
&< \exp[-\gO(f(k-2,(1+\gd)npt^{1/(k-2)}))]\\
%&< \exp[-\gO(\min\{n^2p^{k-1}t^{(k-1)/(k-2)}, n^{k-2}p^{\Cc{k-1}{2}}t\})]\\
&< p^{\gO(n^2p^{k-1})}.
\end{align*}
(Note that for very small $p$
the extra factor $t$ in \eqref{ww1*b}---which did not appear in
\eqref{wlpb}---is needed for the final inequality here.)

We now condition on $E(G)\sm \nabla(A)$
%\nabla(A,B)\cup (E(G)\sm \nabla(A))$
and assume that, as in \eqref{ww1*b}, $\ww_1(b)< m ~\forall b\in B''$.
Note that $a\in W(A)$ means (at least) that there is some $b\in B''$ with
\beq{w*ab}
\ww'(a,b) \geq 3^{k-2}.
\enq

\mn
For $i\in \{3,\ldots, k\}$ (and any $b$), let
$V_i^*(b)$ be the set of vertices of $V_i$ lying on copies of $H_1$
that contain $b$.  Since
$$
\mbox{$\ww'(a,b) \leq \prod_{i=3}^k|N(a)\cap V_i^*(b)|$,}
%\leq (|N(a)\cap (\cup_{i=3}^k V_i^*(b))|/(k-2))^{k-2}$}
$$
\eqref{w*ab} at least requires
$|N(a)\cap (\cup_{i=3}^k V_i^*(b))|\geq 3(k-2)$; so the probability
(for a given $a$) that there is some $b$ for which
\eqref{w*ab} holds is at most
$$
n\Pr(B((k-2)m,p)\geq 3(k-2))
<  p^{-(k-1)/2+(1-o(1))3(k-2)} < p^{k-1}=:q.
$$
But then, since (say) $enq^{3/4}<1$, Lemma \ref{L2} gives
\eqref{A''B''2}.\qqqed
%(recall by Case $2$ that $\log^8 n> n^2p^{k-1}$)
%$$|W(A)|<^*\log^8 n $$

\mn
{\em Remark.}
Of course \eqref{wwab} is the counterpart of \eqref{w2*a} of Case 1
(since $H$ is now $K_k$ the two bounds differ only by small constant factors);
but for very small $p$
the simple inductive derivation of \eqref{w2*a} using Theorem \ref{GGThm}(b)
no longer applies, since
$f(k-2,(1+\delta)np)$ may be much smaller than $ f(k,n)$.

\mn
{\em Proof of} \eqref{wwab}.
We may assume $b\in B'$ as otherwise $\ww^*(a,b)=0$.  For $i\in\{3,\ldots ,k\}$ let
$$V_i^*(a,b)= \{v\in V_i:  \mbox{some copy of $H$ on $a,b$ contains $v$}\}.$$
We will show that
%for any fixed $\xi >0$,
\beq{abij}
\mbox{w.l.p. $~~|\nabla(V_i^*(a,b),V_j^*(a,b))| < n^2p^{k-1}
~~~\forall i,j,a ~\mbox{and} ~b\in B'.$}
\enq
That this gives \eqref{wwab} is essentially a special case of a
theorem of N. Alon
\cite{Alo}, the precise statement used here (see the proof of Theorem 1.1 in
\cite{FK}) being:
an $r$-partite graph
with at most $\ell$ edges between any two of its parts contains
at most $\ell^{r/2}$ copies of $K_r$.

For the proof of \eqref{abij} we fix $a,b$ and $i<j$, and
think of choosing edges of $G$ in the order:
(i)  $\nabla(b,V_3\cup\cdots\cup V_k)$;
(ii)  $\nabla(V_\ga,V_\gb)$ for all $3\leq \ga<\gb\leq k$ except
$(\ga,\gb) = (i,j)$;
(iii)  $\nabla(a, V_i\cup V_j)$;
(iv)   $\nabla(V_i,V_j)$.
(The remaining edges are irrelevant here.)

Let $H''=H_1-v_iv_j$.
Since $b\in B'$, Lemma \ref{lowerp} gives (since we are in Case 2)
\beq{wH'b}
\ww_{H''}(b) < ^* 2\Psi(H_{1,2}-v_iv_j,(1+\gd)np) =:m .
\enq
Let $V_i^*$ be the set of vertices of $V_i$
contained in copies of $H''$ that contain $b$, and define $V_j^*$ similarly.

If the bound in \eqref{wH'b} holds, then each of $V_i^*,V_j^*$ has
size at most $m< p^{-1}\log^{O(1)}n$; an application of
Lemma \ref{L1} thus shows that w.l.p. each of
$N(a)\cap V_i^*$, $N(a)\cap V_j^*$ (and thus also $V_i^*(a,b)$, $V_j^*(a,b)$)
has size at most (say) $p^{-1/4}$, and a second application gives
\eqref{abij}.\qed

\subsection{Proof for $k=4$}\label{k=4}

\mn
For $k=4$, as in Case 2 above, we can't simply
invoke induction to obtain \eqref{w2*a},
since $f(2,(1+\delta)np)$ ($\approx n^2p^3$) is smaller
than $f(4,n)$.
This is the main reason a separate argument
is needed for $k=4$.
\begin{proof}
In this section, for $x,y\in G$ let $d(x,y)=\max_{j\geq 3} d_j(x,y)$.
We consider the possibilities $H=K_4$ and $H=K_4^-$
($K_4$ with an edge removed) separately.

\bn
{\em Case} 1.  $H=K_4$.
Now $ab$ is heavy if
$\ww^*(a,b)>
%225\Psi(H,n,p)/(n^2p^{k-1}s)=
225n^2p^3/s$.
Here it will be helpful to work with $\ww$ rather than $\ww^*$.
We treat (heavy) $ab$'s with
$\ww(a,b)>n^2p^3$ and those with $\ww(a,b)\in (225n^2p^3/s,n^2p^3]$
separately.

\medskip
To bound the contribution of edges of the first type,
set
$$A^*=\{a:\exists b\in B'', \ww'(a,b)>n^2p^{3}\}\supseteq \{a:\exists b\in B', \ww(a,b)>n^2p^{3}\}
$$
(where
$\ww'$ is as in the paragraph containing
\eqref{W(A)}),
and define $B^*$ similarly.
%$B^*=\{b:\exists a\in A', \ww(a,b)>n^2p^{3}\}$.
We first show
\beq{A*B*}
\mbox{w.l.p. $~~|A^*|, |B^*|<np^{7/4}$.}
\enq

\mn
To see this (for $A^*$, say) we
condition on the value of $\nabla(B,C\cup V_4)$ and consider
$\Pr(a\in A^*)$.
Noting that for any $a$ and $b\in B''$,
%with $d_i(b)\leq (1+\delta)np $ for $i=3,4$
%(used here in place of the stronger $b\in B'$, which
%is not decided by the present
%conditioning),
%
$$\Pr(\ww'(a,b)\geq n^2p^{3})\leq
\Pr(d(a,b)>np^{5/4})+\Pr(\ww'(a,b)\geq n^2p^{3}|d(a,b)\leq np^{5/4})
$$
(where $5/4$ is just a convenient value between $1$ and $3/2$), \
we have
\begin{eqnarray}\label{ainA*}
\Pr\left(a\in A^* \right)
&<&
n[ 2\Pr(B((1+\delta)np,p)>np^{5/4})
+ \Pr(B(n^2p^{5/2},p)>n^2p^3)]\nonumber\\
&\leq & p^{\gO(np^{5/4})}  + p^{\gO(n^2p^3)}.
\end{eqnarray}
Since (given $\nabla(B,C\cup V_4)$)
the events $\{a\in A^*\}$ are independent, Lemma \ref{L2} now gives
\eqref{A*B*}.
(Note that when the second term dominates \eqref{ainA*}, Lemma \ref{L2}
gives $A^*=\0$ w.l.p.)

On the other hand, again using Lemma \ref{L1}, we have
\begin{eqnarray*}
\Pr(\exists a\in A,b\in B': \ww(a,b)>n^2p^3t)&<& n^2\Pr(B((1+\delta)^2n^2p^2,p)>n^2p^3t)\\
&<& p^{\gO(n^2p^3)},
\end{eqnarray*}
and combining this with \eqref{A*B*} gives
$$
\sum\{\ww^*(a,b):
\ww(a,b)>n^2p^3 \}<^*|A^*||B^*|n^2p^3t <^* n^4p^{6.5}t
~~~(<\vt n^4p^6).$$

For $ab$ of the second type (i.e. with $\ww(a,b)\in (225n^2p^3/s,n^2p^3]$),
we take $J=15np^{3/2}/\sqrt{s}$, set
$A_J=\{a:\exists b\in B'', d(a,b)>J\}$, and define $B_J$ similarly.
%$B_J=\{b:\exists a\in A', d(a,b)>J\}$.
Given $\nabla(B,C\cup V_4)$ the events $\{a\in A_J\}$ are independent with, for each $a$,
$$\Pr(a\in A_J)<2n\Pr(B((1+\delta)np,p)>J)<2n p^{(1-o(1))J/2}=:q.$$

\mn
(using $e(1+\delta)np^{3/2+o(1)}<J$ for the second inequality).
Since $enq^{1/2}<1$ (to see this, note $J$ is always at least 15,
and is $n^{\gO(1)}$ if $p>n^{-2/3+\gO(1)}$), Lemma \ref{L2} gives
$$|A_J|<^* \sqrt{\vt} n^2p^{3}/J.$$
Of course an identical discussion applies to
$|B_J|$, so we have $|A_J||B_J|<^* \vt sn^2p^3$ and, by Lemma \ref{L1},
$$|\nabla(A_J,B_J)|<^* \vt n^2p^3.$$
Thus, finally,
\begin{align*}
&\sum \{\ww^*(a,b):\text{$ab$ heavy, } \ww(a,b)\in (n^2p^3/s,n^2p^3]\}~~~~~~\\
&~~~~~~~~~~~~~~~~~~~~~~<^* |\nabla(A_J,B_J)|n^2p^3 = \vt n^4p^6
\end{align*}\qed

\bn
{\em Case} 2:  $H=K_4^-$.
Recall that $v_3v_4$ is the missing edge and an edge $ab$ is heavy if $\ww^*(a,b)>
225\Psi(H,n,p)/(n^2p^{3}s)=
225n^2p^2/s.$
We proceed more or less as in the second part of Case 1.

Set $J=15np/\sqrt{s}$,
$A_J=\{a: \exists b\in B'', d(a,b)>J\}$ and
$B_J=\{b: \exists a\in A'', d(a,b)>J\}$.
Given $\nabla(B,C\cup V_4)$
the events $\{a\in A_J\}$ are independent with,
for each $a$,
$$\Pr(a\in A_J)\leq 2n \Pr(B((1+\delta)np,p)>J)
<2np^{J/2}<p^{J/3}=:q~
$$

\mn
(using Lemma \ref{L1} and
$J>ep^{-1/2}(1+\delta)np^2$ for the second inequality).
Since (say) $enq^{1/2}<1$, Lemma \ref{L2} gives
$$|A_J|<^* n^2p^3/J,$$
and similarly for $B_J$.
Since $ab$ heavy at least requires $a\in A_J, b\in B_J$ and $a\in A'$
(and since $a\in A'$ implies $\ww(a,b) < ((1+\gd)np)^2$),
this says that
the number of type three copies of $H$
%containing a heavy $ab$
is
at most

$$|A_J||B_J|((1+\delta)np)^2<^*(n^2p^3/J)^2((1+\delta)np)^2 <\vt n^4p^5
$$
\end{proof}

\section{Proof of Lemma \ref{P5}}\label{LP5}

\mn
As earlier, set $H'=H-v_1v_2$ and $\ww'=\ww_{H'}$.  Let $X'=\sum_{a\in A, b\in B} \ww'(a,b)$.  Then $X'=X_{H'}$ depends only on $E(G)\sm \nabla(A,B)$.  Thus
\beq{Z}
X'<^* (1+\vt)\Psi(H',n)
=(1+\vt)\Psi(H,n)/p,
\enq
where the inequality is given by induction if $d(v_2)=k-1$
and by Lemma \ref{dk2} if $d(v_2)=k-2$.
%Let
%$
%\ww''(a,b) =\sum \ww'(a,b,c),
%$
%with the sum over
%$$\{c\in N(a)\cap N(b):
%d(a,c), d(b,c)\leq np^{3/2}; d(c)\leq (1+\delta)np\},$$
%and note $\ww''(a,b)= \ww^*(a,b)$
%if $ab\in E(G)$ and $a,b$ are not high degree,
%and otherwise $\ww^*(a,b)=0$.

\mn
Then
$$
Y:=\sum_{a\in A,b\in B}\min\{\ww'(a,b), \gz\}{\bf 1}_{\{ab\in E(G)\}}\geq
\sum_{a\in A,b\in B} \ww^*(a,b){\bf 1}_{\{{\sf w}^*(a,b)\leq \gz\}}.
%\sum\{\ww'(a,b):\ww'(a,b)\leq \gz, ab\in E(G)\} .
$$
In view of \eqref{Z} it's enough to show that under any conditioning on
$E(G)\sm \nabla(A,B)$ for which
$X'<(1+\vt)\Psi(H,n)/p$,
$$%\beq{PrY}
\Pr(Y>(1+2\vt)\Psi(H,n)) <
\exp [-\gO_{\vt}(n^2p^{k-1}s)]  ~~(=\exp[-\gO_\vt(f(k,n,p))]).
$$%\enq

\mn
But under any such conditioning (or any conditioning on $E(G)\sm \nabla(A,B)$),
%$$Y = \sum\{\ww'(a,b){\bf 1}_{\{ab\in E(G)\}}:\ww'(a,b)\leq \gz\}$$
the r.v.'s ${\bf 1}_{\{ab\in E(G)\}}$ are independent;
so, noting
$\E Y\leq pX'< (1+\vt) \Psi(H,n)$ and
%\eqref{PrY} follows
using Lemma \ref{Chern}, we have
$$\Pr\left(Y>(1+2\vt)\Psi(H,n)\right) < \exp[-\gO_{\vt}(\Psi(H,n)/\gz)]=\exp [-\gO_{\vt}(n^2p^{k-1}s)].$$\qed

\section{Proof of Theorem \ref{lower}}\label{PT1.2}

Recall here $H=K_k$.
Set
$\rrr =\lceil 2\E \xi_H\rceil
=\lceil 2\binom{n}{k}p^{\binom{k}{2}}\rceil$.
Note that we only need to prove Theorem \ref{lower} for small $p$,
for simplicity say $p<n^{-2/(k-1)}\log n$, since above this
$f(k,n,p)= n^2p^{k-1}\log (1/p)$ and the theorem is given by the lower bound
in \eqref{jor}.
It will thus be enough to show

\begin{prop}
For $n^{-2/(k-1)}\leq p< n^{-2/(k-1)}\log n$,
$$\Pr(\xi_H=\rrr )> \exp[-O(\rrr )]$$
\end{prop}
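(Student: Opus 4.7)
The plan is to plant a small ``near-clique'' on a carefully chosen vertex subset. Since in the given range $r=\lceil 2\E\xi_H\rceil$ is at most $(\log n)^{\binom{k}{2}}$, hence polylogarithmic in $n$, we may pick the smallest $m$ with $\binom{m}{k}\ge r$; then $m=O(r^{1/k})+O(1)$ and the overshoot $\binom{m}{k}-r$ is at most $\binom{m-1}{k-1}=O(r^{(k-1)/k})=o(r)$. I would fix $S\subseteq V$ with $|S|=m$ and choose a subgraph $G^\ast$ of $K_S$ that contains exactly $r$ copies of $K_k$, obtained by starting from $K_m$ and removing $O(r^{1/k})$ well-chosen edges to eliminate exactly the excess copies.

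Next, let $\mathcal{A}$ be the event $\{E(G)\cap\binom{S}{2}=E(G^\ast)\}$, so $\Pr(\mathcal{A})\ge(p/2)^{\binom{m}{2}}$ (absorbing the $(1-p)$ factors for the absent edges of $\binom{S}{2}\setminus E(G^\ast)$). Let $\mathcal{B}$ be the event that no copy of $K_k$ in $G$ uses any edge outside $\binom{S}{2}$; on $\mathcal{A}\cap\mathcal{B}$ we have $\xi_H=r$ exactly. Conditional on $\mathcal{A}$, the edges outside $\binom{S}{2}$ are independent Bernoulli$(p)$ variables and $\mathcal{B}$ is decreasing in them, so FKG gives
\[\Pr(\mathcal{B}\mid\mathcal{A})\ge \prod_{T\not\subseteq S,\, T\cap S\text{ is a clique in }G^\ast}\bigl(1-p^{\binom{k}{2}-\binom{|T\cap S|}{2}}\bigr)\ge \exp\Bigl[-O\!\Bigl(\sum_{j=0}^{k-1}\binom{m}{j}\binom{n}{k-j}\,p^{\binom{k}{2}-\binom{j}{2}}\Bigr)\Bigr].\]
A routine check shows that the $j=0$ contribution $\binom{n}{k}p^{\binom{k}{2}}=\Theta(r)$ dominates the sum, giving $\Pr(\mathcal{B}\mid\mathcal{A})\ge\exp[-O(r)]$. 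Combining, $\Pr(\xi_H=r)\ge(p/2)^{\binom{m}{2}}\exp[-O(r)]$.

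It then remains to check $(p/2)^{\binom{m}{2}}\ge\exp[-O(r)]$, i.e.\ $\binom{m}{2}\log(1/p)=O(r)$, which reduces to $r^{2/k}\log(1/p)=O(r)$. This is essentially the condition that $n^kp^{\binom{k}{2}}\le O(n^2p^{k-1}\log(1/p))$ (up to constants), and holds in the subregime where the second term of $f(k,n,p)$ is the minimum; outside that subregime, the proposition is not actually needed, since the theorem then follows from the lower bound in \eqref{jor}.

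The main obstacle is the lower end of the range, where $p$ is so close to $n^{-2/(k-1)}$ that $r=O(1)$ while $\log(1/p)=\Theta(\log n)$: here $(p/2)^{\binom{m}{2}}=n^{-O(1)}$ is far smaller than the desired $\exp[-O(1)]$, and the planting estimate is inadequate. For this boundary regime I would argue separately by Poisson approximation: since $\E\xi_H=\Theta(1)$ and $H=K_k$ is strictly balanced at its threshold $p=n^{-2/(k-1)}$, a Chen--Stein (or classical method of moments) argument gives $\xi_H\to\mathrm{Poisson}(\E\xi_H)$ in distribution, so $\Pr(\xi_H=r)\to \lambda^r e^{-\lambda}/r!=\Omega(1)=\exp[-O(r)]$. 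The secondary obstacle is the combinatorial surgery used to produce $G^\ast$ with exactly $r$ copies of $K_k$; this requires verifying that the sequence of $K_k$-counts obtained by removing edges from $K_m$ takes every value between $r$ and $\binom{m}{k}$ (or, if this fails exactly, allows us to land within $O(1)$ of $r$, after which we can iterate the argument with $r$ replaced by a nearby value).
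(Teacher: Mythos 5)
There is a genuine gap, and it sits exactly where the proposition is needed. Your planting cost is $(p/2)^{\binom{m}{2}}=\exp[-\Theta(\rrr^{2/k}\log(1/p))]$, and since $\rrr^{2/k}=\Theta(n^2p^{k-1})$, the condition $\binom{m}{2}\log(1/p)=O(\rrr)$ is equivalent to $n^2p^{k-1}\log(1/p)=O(n^kp^{\binom{k}{2}})$ --- i.e.\ to the \emph{first} term of $f(k,n,p)$ being the minimum, not the second as you wrote. But that is precisely the subregime in which Theorem \ref{lower} already follows from the lower bound in \eqref{jor} and the proposition is superfluous; in the complementary subregime, where $f(k,n,p)=n^kp^{\binom{k}{2}}=\Theta(\rrr)$ and the proposition is the whole point, your bound $\exp[-\Theta(\rrr^{2/k}\log(1/p))]$ is genuinely smaller than $\exp[-O(\rrr)]$. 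Your two proposed patches do not fill this hole: the appeal to \eqref{jor} points the wrong way, and Chen--Stein or method-of-moments Poisson approximation only controls $\Pr(\xi_H=\rrr)$ up to an additive error polynomial in $1/n$, which says nothing about a point probability of size $\exp[-\Theta(\rrr)]$ once $\rrr\gg\log n$; note the uncovered range includes all $\rrr$ up to roughly $(\log n)^{k/(k-2)}$, not just $\rrr=O(1)$.

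The structural reason the near-clique fails here is that it buys the $\rrr$ copies with few edges but essentially no placement entropy. The paper instead plants $\rrr$ \emph{vertex-disjoint} copies of $K_k$: a single placement costs the much larger $p^{\rrr\binom{k}{2}}$, but the events $R_S$ are mutually exclusive over the $s>(n^k/(\rrr k^k))^{\rrr}$ placements, and $s\,p^{\rrr\binom{k}{2}}=(n^kp^{\binom{k}{2}}/(\rrr k^k))^{\rrr}=\exp[-O(\rrr)]$ precisely because $n^kp^{\binom{k}{2}}=\Theta(\rrr)$ throughout the stated range. The conditional probability that no further copies appear is then handled by Harris's inequality much as in your estimate of $\Pr(\mathcal{B}\mid\mathcal{A})$ (that part of your argument is fine). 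If you want to keep a planting proof, this is the structure to plant.
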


\begin{proof}
(This is an easy generalization of the argument for $k=3$
given in \cite{DeM}.)
The number of sets $S$ of $\rrr $ vertex-disjoint copies of $H $ in $K_n$ is

\beq{srk}
s:=\frac{(n)_{\rrr k}}{\rrr !(k!)^{\rrr }} > \left(\frac{n^k}{\rrr  k^k}\right)^{\rrr }.
\enq

\mn
For such an $S$, let $Q_S$ and $R_S$ be the events
$\{G $ contains all members of $S\}$ and $\{S$ is the set of $H $'s of $G\}$.
We have
$\Pr(Q_S)=p^{\rrr \binom{k}{2}}$ and will show (for any $S$)
\beq{RSQS}
\Pr(R_S|Q_S) =\exp[-O(\rrr )],
\enq
whence (using \eqref{srk})
\begin{eqnarray*}
\Pr(\xi_H=\rrr ) &>& \sum_S \Pr(Q_S) \Pr(R_S|Q_S)
~=~s p^{\rrr \binom{k}{2}}\exp[-O(\rrr )]\\
&>& \left(\frac{n^kp^{\binom{k}{2}}}{\rrr  k^k}\right)^\rrr  \exp[-O(\rrr )]
~=~\exp[-O(\rrr )].
\end{eqnarray*}

\medskip
For the proof of \eqref{RSQS}, fix $S$; let $W$ be the union of the
vertex sets of the copies of $H $
in $S$; and
for $i=0\dots k$, let $T(i)$ be the set of $H $'s (in $K_n$)
having exactly $i$ vertices outside $W$.
We have
\begin{eqnarray}\label{PrRS}
\Pr(R_S|Q_S) &\geq &(1-p)^{|T(0)|}
\prod_{i=1}^k\left(1-p^{\C{i}{2}+(k-i)i}\right)^{|T(i)|}\\
&=& \exp[-O(\rrr )].\nonumber
\end{eqnarray}
Here the first inequality is given by Harris' Inequality \cite{Harris}
(which for our purposes says that for a product probability measure
$\mu$ on $\{0,1\}^E$ (with $E$ a finite set) and events $\A_i\sub \{0,1\}^E$
that are either all increasing or all decreasing,
$\mu(\cap \A_i)\geq \prod \mu(\A_i)$),
and for the second we can use, say,
$|T(i)|< n^i(\rrr k)^{k-i}$ for $0\leq i\leq k$.
(We omit the easy arithmetic, just noting that all factors but the last
(that is, $i=k$)
in \eqref{PrRS} are actually much larger than $\exp[-O(\rrr )]$.)

\end{proof}

\section{Concluding Remarks }\label{CR}
Of course the big question is, what is the true behavior of the probability
\eqref{Prxi} for general $H$?
We continue to use $\xi_H$ for $\xi_H^{n,p}$, and
here confine ourselves to $\eta =1$; that is,
we're interested in $\Pr(\xi_H>2\E \xi_H)$.
As usual we don't ask for more than the order of magnitude of
the exponent.

One can show, mainly following the argument of Section \ref{PT1.2},
that for any $K \subseteq H$
\beq{Elb}
\Pr\left(\xi_H\geq 2\E\xi_H\right)>\exp[-O_H(\Psi(K ,n,p))]
\enq
(where, recall, $\Psi(K,n,p)=n^{v_K}p^{e_K}$).
As far as we can see, it could be that
the truth in \eqref{Prxi} is
always given by the largest of the lower bounds in \eqref{Elb}
and \eqref{jor}.  For the latter we (finally) define
\beq{jor2}
M_H(n,p) =\left\{\begin{array}{ll}
n^2p^{\gD_H}
&\mbox{if $p\geq n^{-1/\gD_H}$}\\
\min_{K\sub H}(\Psi(K,n,p))^{1/\ga^*_K}
&\mbox{if $n^{-1/m_{_H}}\leq p\leq n^{-1/\gD_H}$}
\end{array}\right.
\enq

\mn
(where, as usual, $\ga^*$ is fractional independence number;
see e.g. \cite{JOR} or \cite{MGT}).
This is not quite the same as the quantity $M^*_H(n,p)$
used in \cite{JOR},
but, as shown in their Theorem 1.5, the two agree up to
a constant factor; so the difference is irrelevant here.

\begin{conj}\label{Conj}
For any $H$ and $p> n^{-1/m_H}$,
\beq{conj}
\Pr\left(\xi_H\geq 2\E\xi_H\right)
=\exp[-\Theta_H(\min\{\min_{K \sub H, e_K>0}\Psi(K ,n,p),M_H(n,p)t\})].
\enq
\end{conj}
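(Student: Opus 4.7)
The plan splits into the lower bound (already nearly at hand) and the upper bound (the substantive task). For the lower bound, the $\exp[-O(M_H(n,p)\,t)]$ half is \eqref{jor} of \cite{JOR}, and the $\exp[-O(\Psi(K,n,p))]$ piece announced in \eqref{Elb} is obtained by a direct generalization of Section \ref{PT1.2}: plant a maximum packing of vertex-disjoint copies of $K$ in $G(n,p)$ (cost roughly $p^{r\,e_K}$, with $r$ chosen so that these planted copies alone are expected to generate at least $2\E\xi_H$ copies of $H$), then use Harris' Inequality to guarantee with constant conditional probability that no spurious copies appear outside the planted structure. The first-moment estimates are somewhat more involved than for cliques because extending a planted $K$ to a full $H$ uses still-random edges, but the argument is routine.

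For the upper bound I would follow the scaffolding of Theorem \ref{GGThm}. The reduction to the partite model (Propositions \ref{Equiv1}--\ref{Equiv2}) already works for arbitrary $H$, so one inducts on $(v_H,e_H)$. For each proper subgraph $K \subset H$ with $e_K>0$, define an \emph{anomaly event} $\A_K$ asserting that some localized substructure contains $\gO_{\eps}(\Psi(K,n,p))$ more copies of $K$ than expected, with the localization designed so that (i) $\Pr(\A_K) \leq \exp[-\gO(\Psi(K,n,p))]$ by the induction hypothesis applied to $K$, and (ii) conditional on $\bigcap_K \ov{\A_K}$, the number of copies of $H$ passing through any $K$-anomalous substructure is at most $\eps\E\xi_H/v_H$. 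The remaining \emph{generic} copies of $H$ should then be controlled by a single application of Lemma \ref{Chern} along a well-chosen cut edge of $H$, exactly as in Lemma \ref{P5}; the per-edge weight bound extracted from the absence of anomalies should yield an exponent of order $\Psi(H,n,p)/\zeta = M_H(n,p)\,t$.

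The main obstacle is calibrating the family of anomalies. For cliques, Lemmas \ref{P1}--\ref{P4} require only three levels---high-degree vertex, high-codegree pair, heavy edge---with thresholds set so that the probabilities and the $H$-contributions balance cleanly. For a general $H$ the relevant events should be indexed by pairs $(K,\phi)$, where $\phi$ records how $K$ sits inside $H$, since the threshold for a $(K,\phi)$-anomaly depends on the expected number of extensions of an embedded $K$ to a copy of $H$---which is precisely where the $\ga^*_K$ appearing in \eqref{jor2} enters. Producing an anomaly hierarchy whose probabilities and induced $H$-counts telescope correctly---avoiding double-counting of copies that use several anomalies, and preventing collective accumulation from many near-threshold anomalies---is the central combinatorial difficulty, and is essentially why Conjecture \ref{Conj} has resisted the direct extension of the clique proof. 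I suspect a successful attack will require a more global device than the peel-and-induct strategy of Lemmas \ref{P1}--\ref{P4}: perhaps a container- or compression-style argument that controls all $K$-anomalies through a single entropy computation, or an adaptation of the nonlinear large-deviation framework of \cite{Chat} to general $H$.
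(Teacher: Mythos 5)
The statement you are addressing is presented in the paper as Conjecture \ref{Conj}, not as a theorem: the authors prove the upper bound only for graphs whose complement is a matching (Theorem \ref{upper}), and the general case is explicitly left open. Your proposal does not close it. The lower-bound half is fine and matches what the paper asserts: \eqref{jor} supplies $\exp[-O(M_H(n,p)t)]$, and the planting argument behind \eqref{Elb} (plant roughly $\Psi(K,n,p)$ disjoint copies of the minimizing $K$, use Harris/Janson to control the conditional behavior of the extensions and of the spurious copies) supplies $\exp[-O(\Psi(K,n,p))]$; taking the better of the two gives exactly the claimed lower bound. But the substance of the conjecture is the matching upper bound, and there your text is a description of what a proof would have to accomplish rather than a proof. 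The decisive step --- exhibiting, for each $(K,\phi)$, an anomaly event $\A_{K,\phi}$ with $\Pr(\A_{K,\phi})\leq\exp[-\gO(\Psi(K,n,p))]$ \emph{and} such that on $\bigcap\ov{\A_{K,\phi}}$ the non-generic copies number at most $\eps\E\xi_H$ while the generic copies admit a per-edge weight bound of order $\Psi(H,n,p)/(M_H(n,p)t)$ --- is precisely the open problem, and you concede as much when you write that calibrating the hierarchy ``is essentially why Conjecture \ref{Conj} has resisted the direct extension of the clique proof'' and that a ``more global device'' is probably needed.

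To see concretely why the gap is not just bookkeeping: already in the clique case the paper needs three qualitatively different anomaly levels (high-degree vertices, high-codegree pairs, heavy edges), each with a threshold tuned so that both the probability bound and the weight bound come out right, and the $k=4$ and small-$p$ regimes require separate ad hoc arguments (Section \ref{k=4}, Case 2 of Section \ref{k=5}) because the naive inductive appeal to Theorem \ref{GGThm}(b) fails when $f(k-2,(1+\gd)np)\ll f(k,n,p)$. For a general $H$ the exponent $M_H(n,p)$ in the regime $p\leq n^{-1/\gD_H}$ is governed by the fractional independence numbers $\ga^*_K$, and no one has identified localized events whose probabilities and whose induced $H$-counts reproduce that exponent. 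Your proposal names the difficulty accurately but supplies no construction, no threshold values, and no verification of either property (i) or (ii), so it cannot be counted as a proof of the conjecture; it is, rather, a reasonable statement of the research program the paper's concluding section already outlines.
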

\nin(Recall $t=\log(1/p)$.)  We remark without proof (it is not quite obvious as far as we know)
that, for a given $H$, the set of $p$ for which the (outer)
minimum in \eqref{conj} is $M_H(n,p)t$ is the interval
$[p_K,1]$, where $K$ is a smallest subgraph of $H$
with $m_K=m_H$ and $p_K$ is the unique $p$ for which
$\Psi(K,n,p)=M_H(n,p)\log (1/p)$.

Conjecture \ref{Conj}
gives a different perspective on the observation
from \cite[Section 8.1]{JOR} that $H=K_2$ shows that the lower bound
in \eqref{jor} is not always tight.
In this case $M_H(n,p)=n^2p$ for the full range of $p$ above and,
of course, $\xi_H$ is just
${\rm Bin}(\C{n}{2},p)$; so
the upper bound in \eqref{jor} is the truth.
But in fact \eqref{Elb} shows (with a little thought) that
the lower bound in \eqref{jor} is not tight
for {\em any} $H$ and
sufficiently small $p$ ($> n^{-1/m_H}$), since
for small enough $p$
one of the terms
$\Psi(K ,n,p)$ in \eqref{conj} is $o(M_H(n,p)t)$.
What's special about $K_2$ is that it is the only (connected) $H$ for
which the best lower bound is {\em never} given by \eqref{jor};
that is, the minimum in \eqref{conj} is never $M_H(n,p)t$.

\medskip
It also seems interesting to estimate
\beq{lld}
\Pr(\xi_H\geq \gc\E \xi_H)
\enq

\mn
when $\gc =\gc(n)=\go(1)$.
%For $H=K_k$ and $p$ in the ``generic" range
%where $f(k,n\tau^{1/k},p)=n^2\tau^{2/k}p^{k-1}t $,
%
The present results essentially do this
%(as usual up to a constant in the exponent)
for $H=K_k$ and ``generic" $p$; precisely,
Theorem \ref{GGThm}(b) implies (using a mild variant of
Proposition \ref{Equiv1})
\beq{ugnl}
\Pr(\xi_H>2\tau \Psi(H,n,p))<
\exp[-\Omega( f(k,n\tau^{1/k},p))],
\enq
which, for $p$ in the range where
$f(k,n\tau^{1/k},p)=n^2\tau^{2/k}p^{k-1}t $,
is (up to the constant in the exponent) the probability of containing
a clique of size
$np^{(k-1)/2}(2\tau)^{1/k}$
(provided this is not more than $\C{n}{k}$).
Of course
the trick that gets Theorem \ref{GGThm}(b)
from Theorem \ref{GGThm}(a) is general, so
results on Conjecture \ref{Conj} give corresponding upper bounds
for \eqref{lld}; but these bounds will not be tight in general,
and at this writing we don't have a good guess as to
the general truth in \eqref{lld}.\\

{\bf Acknowledgment.}
We would like to thank one of the referees for an exceptionally
careful reading and for pointing out \cite{Vu}.

\bn
Department of Mathematics\\
Rutgers University\\
Piscataway NJ 08854\\
rdemarco@math.rutgers.edu\\
jkahn@math.rutgers.edu

\end{document}